\numberwithin{equation}{section}
\newtheorem{theorem}{Theorem}[section]
\newtheorem{lemma}[theorem]{Lemma}
\newtheorem{corollary}[theorem]{Corollary}
\newtheorem{question}[theorem]{Question}
\newtheorem*{conjecture*}{Conjecture}
\newtheorem*{convention*}{Convention}
\newtheorem*{question*}{Question}
\theoremstyle{definition}
\newtheorem{definition}[theorem]{Definition}
\newtheorem{example}[theorem]{Example}
\newtheorem{proposition}[theorem]{Proposition}
\newtheorem{remark}[theorem]{Remark}
\newcommand{\abs}[1]{\lvert#1\rvert}
\newcommand{\remdc}[1]{\begingroup\color{blue}#1\endgroup}
\newcommand{\remxg}[1]{\begingroup\color{red}#1\endgroup}
\newcommand{\F}{\mathbb{F}}
\newcommand{\Q}{\mathbb{Q}}
\newcommand{\Z}{\mathbb{Z}}
\newcommand{\CP}{\mathbb{C}P^{\infty}}
\newcommand{\BM}{B_{\textup{M}}}
\newcommand{\EM}{E_{\textup{M}}}
\newcommand{\cc}{\textup{c}}
\newcommand{\Id}{\textup{Id}}
\newcommand{\Ker}{\textup{Ker}}
\newcommand{\col}{\textup{coll}}
\newcommand{\wt}{\widetilde}
\newcommand{\wh}{\widehat}
\newcommand{\xra}{\xrightarrow}
\newcommand{\del}{\partial}
\newcommand{\ol}{\overline}
\newcommand{\ul}{\underline}
\newcommand{\opn}{\operatorname}
\newcommand{\XX}[1]{X^{({#1})}}
\newcommand{\BB}[1]{B^{({#1})}}
\begin{document}

\title{On $H^*(BPU_n; \Z)$ and Weyl group invariants}

\author{Diarmuid Crowley}
\address{School of Mathematics and Statistics, The University of Melbourne, Parkville VIC 3010, Australia}
\curraddr{}
\email{dcrowley@unimelb.edu.au}

\thanks{}

\author{Xing Gu}
\address{Institute for Theoretical Sciences, Westlake University, School of Science, 600 Dunyu Road, Sandun town, Xihu district, Hangzhou 310030, Zhejiang Province, China.} 
\email{guxing@westlake.edu.cn}
\thanks{The second named author is supported by the Young Scientists Fund, NSFC, No.12201503.}

\subjclass[2020]{55R35, 55R40, 55T10}

\date{}

\dedicatory{}

\keywords{}

\begin{abstract}
For the projective unitary group $PU_n$ with a maximal torus $T_{PU_n}$ and Weyl group $W$,
we show that the 
integral restriction homomorphism
\[\rho_{PU_n} \colon H^*(BPU_n;\Z)\rightarrow H^*(BT_{PU_n};\Z)^W\]
to the integral invariants of the Weyl group action is onto.
We also present several rings naturally isomorphic to $H^*(BT_{PU_n};\Z)^W$.

In addition we give general sufficient conditions for the restriction homomorphism $\rho_G$ to be onto for a connected compact Lie group $G$.

\end{abstract}

\maketitle
\section{Introduction}\label{sec:Intro}

Let $G$ be a compact connected Lie group. Its maximal tori are determined up to conjugacy. Let $T_G$ be one of the maximal tori and $N_G$ its normalizer. Then the Weyl group, $W_G\coloneqq N_G/T_G$, acts by conjugation on $T_G$:
we shall often omit the subscript $G$ from the notation.

For a topological group $\Gamma$,
let $B\Gamma$ denote any model for the classifying space of $\Gamma$, together with a choice of universal principal $\Gamma$-bundle over $B\Gamma$, and let $H^*(B\Gamma; R)$ denote cohomology with $R$ coefficients.
The conjugation action of $W$ on $T$ induces an action of $W$ on the cohomology ring 
$H^*(BT;R)$ and the inclusion $T\hookrightarrow G$ induces a ring homomorphism 
%
\[H^*(BG;R)\rightarrow H^*(BT;R),\]
%
whose image lies in $H^*(BT;R)^W$, the subring of invariants under the $W$-action.
Hence we obtain the restriction homomorphism
\begin{equation}\label{eq:Weyl Group Inv}
\rho_{G; R} \colon H^*(BG;R)\rightarrow H^*(BT;R)^W.
\end{equation}

The study of $H^*(BG; R)$ via the homomorphism $\rho_{G; R}$ was
pioneered by Leray \cite{leray1949lanneau}. The modern form of the problem was considered by Borel \cite{borel1953cohomologie}, and later developed by Borel and Hirzebruch \cite{borel1958}. Determining the image of $\rho_{G;R}$ remains an important problem in algebraic topology.
It is well known that $\rho_{G;\,\Q}$ is onto for all $G$: there are numerous proofs,
such as the one in Chapter III of \cite{hsiang2012cohomology}.
It is also well known for
$G = U_n$, the unitary group, that $\rho_{U_n; R}$ is onto for all coefficient groups $R$.
For the projective unitary group $G = PU_n = U_n/S^1$,
where $S^1 \subseteq U_n$ is the center, the situation is less clear:
%
%
Toda proved that $\rho_{PU_n;\,\Z/2}$ is onto for
$n\equiv2\pmod{4}$ and $n=4$ \cite{toda1987cohomology} and we discuss the integral
case below.

In this paper we will focus on integral Weyl group invariants and set
%
\[H^*(-) \coloneqq H^*(-;\Z),\ \ \rho_G \coloneqq  \rho_{G; \Z}.\]
Feshbach \cite{feshbach1981image} proved that $\rho_{SO_n}$ is onto
for all $n$ but that $\rho_{Spin_{12}}$ is not onto.  Later Benson and Wood \cite{benson-wood1995}
proved for all $n \geq 6$ that $\rho_{Spin_n}$ is onto if and only if $n$ is not congruent to $3, 4$ or $5$ modulo $8$.
Kameko and Mimura \cite{kameko2012weyl} studied more cases.
For $G = PU_2$, we have $PU_2 \cong SO_3$ and so $\rho_{PU_2}$ is onto.
Vezzosi showed that $\rho_{PU_3}$ is onto \cite{vezzosi2000chow}
and this was later generalized by Vistoli to $\rho_{PU_p}$ for all odd primes $p$
\cite{vistoli2007cohomology}. The second author also proved for degrees $\ast \leq 12$,
that $\rho_{PU_n}$ is onto for all $n$ \cite{gu2019cohomology}.

Despite the progress discussed above, the question of whether $\rho_{PU_n}$ was onto 
remained opened in general.
Recently, the work of Antieau and Williams \cite{antieau2014topological} and the second author \cite{gu2019topological, gu2020topological} 
showed that
the determination of $H^*(BPU_n)$ is an important input for studying the
topological period\textendash index problem
\cite{antieau2014topological} and this has
renewed interest in the map $\rho_{PU_n}$.  Our main result is the following



\begin{theorem} \label{thm:mainPU}
For all $n$, the restriction homomorphism
\[ \rho_{PU_n} \colon H^*(BPU_n)\rightarrow H^*(BT_{PU_n})^W\]
is onto.
\end{theorem}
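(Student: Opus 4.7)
The plan is to combine the general sufficient conditions for the surjectivity of $\rho_G$ (promised in the abstract and presumably established earlier in the paper) with explicit lifts for a generating set of $H^*(BT_{PU_n})^W$. Since $\rho_{G;\Q}$ is surjective classically, the real issue is the $p$-primary torsion at primes $p\mid n$, where the cokernel of $\rho_{PU_n}$ might a priori be nonzero.

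First I would describe $H^*(BT_{PU_n})^W$ concretely. Writing $T_{PU_n}=T_{U_n}/S^1$, the inclusion of character lattices realises $H^2(BT_{PU_n})$ as the sublattice $\{\sum a_ix_i:\sum a_i=0\}\subset H^2(BT_{U_n})$, where $x_1,\ldots,x_n$ are the standard generators. Hence $H^*(BT_{PU_n})$ embeds in $\Z[x_1,\ldots,x_n]$ as the subring generated by the differences $x_i-x_j$. The Weyl group $W=S_n$ acts by permutation of the $x_i$, and I would write down an explicit ring presentation of $H^*(BT_{PU_n})^W$ in terms of symmetric expressions in these differences; this is presumably the content of the ``several rings naturally isomorphic'' remark in the abstract and will pin down a manageable set of generators to lift.

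Next I would construct integral lifts in $H^*(BPU_n)$ for the generators identified above. The most natural candidates come from the adjoint representation: the Chern classes $c_k(\mathfrak{su}_n\otimes\mathbb{C})\in H^{2k}(BPU_n)$ restrict to the elementary symmetric functions $e_k(\{x_i-x_j\}_{i\neq j})$ on $BT_{PU_n}$, and these are manifestly $W$-invariant. Further lifts may be obtained from Chern classes of symmetric, exterior, or tensor power constructions on the adjoint. A useful auxiliary tool is the Serre spectral sequence of the fibration $PU_n/T_{PU_n}\to BT_{PU_n}\to BPU_n$: because $PU_n/T_{PU_n}=U_n/T_{U_n}$ is the usual flag variety, with torsion-free cohomology concentrated in even degrees, the bottom-row edge homomorphism is precisely $\rho_{PU_n}$, and a $W$-invariant class lies in its image if and only if it survives every differential landing in the bottom row.

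The main obstacle is the integrality at primes $p\mid n$: the rational statement is classical, but the $p$-primary torsion in $H^*(BPU_n)$ (for instance that generated by the universal Brauer class $\alpha\in H^3(BPU_n)\cong\Z/n$) interacts non-trivially with the Weyl group action, and one must produce \emph{integral} lifts of all $W$-invariants, not merely rational ones. This is exactly the point at which the general sufficient conditions developed earlier in the paper should apply, reducing the verification to a finite, checkable condition on the explicit Chern class lifts constructed above.
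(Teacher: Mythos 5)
Your proposal diverges substantially from the paper's argument, and the divergence is where the gap lies. The paper's general sufficient condition (Corollary~\ref{cor:main}) is not about checking a finite set of characteristic class lifts: it asks for a \emph{weak $W$-CW-complex model} $X$ for $BT$ such that $X/\!/W \to BW$ models $BN \to BW$ and the natural map $Z^*(X)^W \to H^*(X)^W$ from invariant cocycles to invariant cohomology is onto. The proof of Theorem~\ref{thm:mainPU} then proceeds by constructing such a model for $BT_{PU_n}$ (Proposition~\ref{pro:CW-decomp}), starting from the product cell structure on $BT^n = (BS^1)^n$ and inductively attaching free $W$-cells to realize a model of $BT^{n-1}$ with the right chain-level $\Z W$-module structure. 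At no point does the paper produce explicit Weyl-invariant lifts in $H^*(BPU_n)$.

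Your plan instead hinges on producing such lifts, principally Chern classes of the complexified adjoint $\mathfrak{su}_n \otimes \mathbb{C}$ and of its tensor/symmetric/exterior powers, and verifying that they generate the invariant ring. This is a genuine gap, for two reasons. First, Vistoli showed that $\operatorname{Ker}(\nabla_{\!3}) \cong H^*(BT_{PU_3})^W$ is already not a polynomial ring, and the paper explicitly observes (Remark~\ref{rem:Ker(nabla)}) that it is unclear whether $\operatorname{Ker}(\nabla_{\!n})$ is even finitely generated as a graded ring; so ``a manageable set of generators to lift'' is not available in general. Second, the paper makes a point (Remark~\ref{rem:pcc} and the discussion around \eqref{eq:A}) that finding natural characteristic class constructions in $H^*(BPU_n)$ which map onto the Weyl invariants --- exactly what your plan requires --- is itself an \emph{open problem} for general $n$; Vistoli only accomplished it for $n$ an odd prime. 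Your final paragraph, asserting that the general sufficient conditions ``reduce the verification to a finite, checkable condition on the explicit Chern class lifts,'' misreads what those conditions are: they concern an equivariant cell structure, and the whole point of the paper's route is that it sidesteps the lift-construction problem entirely. Without either a generating set for $\operatorname{Ker}(\nabla_{\!n})$ or a mechanism replacing the CW-model construction, the proposal does not close.
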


We present the outline of the proof of Theorem \ref{thm:mainPU} in Section \ref{ss:outline},
after first discussing some of its consequences. 
Let $q \colon U_n \to PU_n : = U_n/S^1$ 
denote the quotient map defining $PU_n$,
and consider the related fibration sequence of classifying spaces
\begin{equation} \label{eq:BPUn}
BU_n \xra{Bq} BPU_n \to B^2S^1.
\end{equation}
\noindent
As we explain in Section~\ref{s:primitive}, Theorem \ref{thm:mainPU} is
equivalent to the following

\begin{corollary}\label{cor:Bq}
The image of $(Bq)^* \colon H^*(BPU_n) \to H^*(BU_n)$ is a summand, which is
naturally isomorphic to $H^*(BT_{PU_n})^W$.
\end{corollary}

We define four further graded rings naturally isomorphic to $H^*(BT_{PU_n})^W$.


\smallskip
\noindent
(i) For $0 \leq j \leq n$, let $c_j \in H^{2j}(BU_n)$ be $j$th Chern class.
Define the derivation 
\[ \nabla_{\! n} \colon H^*(BU_n) \to H^{*-2}(BU_n)\]
by setting $\nabla_{\! n}(c_j) \coloneqq  (n{-}j{+}1)c_{j-1}$ and extending by the Leibniz rule. Then $\opn{Ker}(\nabla_{\! n})$ is a subring of $H^*(BU_n)$.

\smallskip
\noindent
(ii) Let $\mu \colon BS^1\times BU_n \to BU_n$ be the map which classifies
the homomorphism $S^1 \times U_n \to U_n$, where $S^1 \subset U_n$ is the center
and define the subring of primitive elements
\[ PH^*(BU_n) \coloneqq  \{ c \, \mid \, \mu^*(c) = c \otimes 1 \} \subset H^*(BU_n).\]

\smallskip
\noindent
(iii) Let $d^{0, *}_3 \colon H^*(BU_n) \to H^{*-2}(BU_n)$ denote the $d_3$-differential from the
$0$-column of the Leray\textendash Serre spectral sequence of the fibration \eqref{eq:BPUn}. Since $d^{0, *}_3$ satisfies the Leibniz rule, $\opn{Ker}(d^{0, *}_3)$ is a graded subring of $H^*(BU_n)$.

\smallskip
\noindent
(iv) For a space $X$, we define the quotient ring 
$FH^*(X) \coloneqq  H^*(X)/TH^*(X)$, 
where $TH^*(X)$ is the torsion ideal of $H^*(X)$. The ring to be considered is $FH^*(BPU_n)$.

\begin{theorem} \label{thm:6groups}
There are natural ring isomorphisms
%
\[ FH^*(BPU_n) \! \cong H^*(BT_{PU_n})^W \! \cong \mathrm{Im}((Bq)^*) \! = \opn{Ker}(\nabla_{\!n})
\! = PH^*(BU_n)   \! = \opn{Ker}(d^{0, *}_3). \]
%
\end{theorem}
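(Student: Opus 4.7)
The plan is to establish the chain via two isomorphisms (from Theorem~\ref{thm:mainPU}) and three equalities of subrings of $H^*(BU_n)$; the main obstacle is identifying $d_3 = \nabla_n$ on the $0$-column of the Leray--Serre spectral sequence (LSSS) of \eqref{eq:BPUn}. For $FH^*(BPU_n) \cong H^*(BT_{PU_n})^W$: since $H^*(BT_{PU_n})^W$ is a subring of a polynomial ring, it is torsion-free, so $\rho_{PU_n}$ factors through $FH^*(BPU_n)$, surjectively by Theorem~\ref{thm:mainPU}; injectivity follows by rationalizing, using $FH^*(BPU_n) \hookrightarrow H^*(BPU_n; \Q) \cong H^*(BT_{PU_n}; \Q)^W$ (Borel's classical theorem). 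For $\mathrm{Im}((Bq_n)^*) \cong H^*(BT_{PU_n})^W$, use the commutative square
\[
\begin{tikzcd}[ampersand replacement=\&]
H^*(BPU_n) \arrow[r, "{(Bq_n)^*}"] \arrow[d, "{\rho_{PU_n}}"'] \& H^*(BU_n) \arrow[d, hook] \\
H^*(BT_{PU_n})^W \arrow[r, hook] \& H^*(BT_{U_n})
\end{tikzcd}
\]
whose right and bottom arrows are injective restrictions to tori. Surjectivity of $\rho_{PU_n}$ from Theorem~\ref{thm:mainPU} implies that both composites $H^*(BPU_n) \to H^*(BT_{U_n})$ have the same image, identifying $\mathrm{Im}((Bq_n)^*)$ with $H^*(BT_{PU_n})^W$ as subrings of $H^*(BU_n) = H^*(BT_{U_n})^W$.

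For $H^*(BT_{PU_n})^W = PH^*(BU_n)$ inside $H^*(BU_n)$: the center $S^1 \subset U_n$ is the diagonal subtorus of $T_{U_n}$, so $\mu$ restricts to $\mu_T : BT_{U_n} \times BS^1 \to BT_{U_n}$ with $\mu_T^*(t_i) = t_i + x$. A polynomial is $\mu_T^*$-primitive iff invariant under the diagonal translation, which (via the splitting $T_{U_n} \cong T_{PU_n} \times S^1$) matches the image of $H^*(BT_{PU_n}) \hookrightarrow H^*(BT_{U_n})$; taking $W$-invariants (which commute with $\mu_T$) yields the equality. For $PH^*(BU_n) = \mathrm{Ker}(\nabla_n)$, write $\mu^*(c) = \sum_{i \geq 0} \phi_i(c) \otimes x^i$. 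The Chern class formula $c_j(E \otimes L) = \sum_k \binom{n-k}{j-k} c_k \, c_1(L)^{j-k}$ gives $\phi_1(c_j) = (n-j+1) c_{j-1}$, hence $\phi_1 = \nabla_n$. Coassociativity of $\mu^*$ with the Hopf coproduct $\Delta(x) = x \otimes 1 + 1 \otimes x$ forces $\phi_a \phi_b = \binom{a+b}{a}\phi_{a+b}$, in particular $k\phi_k = \phi_1 \phi_{k-1}$; induction using torsion-freeness of $H^*(BU_n)$ yields $\nabla_n(c) = 0 \Rightarrow \phi_k(c) = 0$ for all $k \geq 1$, so $c$ is primitive.

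For the remaining equality $\mathrm{Ker}(\nabla_n) = \mathrm{Ker}(d_3^{0, *})$, the plan is to show $d_3 = \nabla_n$ on $E_3^{0, *} = H^*(BU_n)$. Since $H^1(B^2 S^1) = H^2(B^2 S^1) = 0$, we have $E_2 = E_3$, and $d_3$ is a derivation, hence determined by its values on the generators $c_j$. The key input is the general principle that for a fibration over $K(\pi, n)$, the $0$-column transgression $d_n$ equals the first-order part of the $\Omega K(\pi, n)$-coaction on the fiber; applied with $\Omega B^2 S^1 = BS^1$ acting on $BU_n$ via $\mu$, this yields $d_3 = \phi_1 = \nabla_n$. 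This identification is the technical heart and main obstacle. As a sanity check, $d_3(c_1) = n\iota_3$ (for $\iota_3$ the generator of $H^3(B^2 S^1) \cong \Z$) produces $E_\infty^{3, 0} = \Z/n$, consistent with the well-known $H^3(BPU_n) = \Z/n$. With $d_3 = \nabla_n$ established, $\mathrm{Ker}(d_3^{0, *}) = \mathrm{Ker}(\nabla_n)$, and the six-term chain closes.
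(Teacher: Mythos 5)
Your proposal is correct and, in two places, takes a genuinely different route from the paper. The two initial isomorphisms ($FH^*(BPU_n) \cong H^*(BT_{PU_n})^W$ via Theorem~\ref{thm:mainPU} and torsion-freeness, and $H^*(BT_{PU_n})^W \cong \mathrm{Im}((Bq_n)^*)$ via the commutative square of restrictions) match the paper. For the equalities inside $H^*(BU_n)$, however, the paper's route is $H^*(BT_{PU_n})^W = \opn{Ker}(\nabla_{\!n})$ first, by a direct algebraic computation in $\Z[v_1,\dots,v_n]^{S_n}$ cited from \cite[Prop.\,3.3]{gu2019cohomology}, and then $PH^*(BU_n) = \opn{Ker}(\nabla_{\!n})$ via Toda's formula $\mu^*(c_k) = \sum_i v^i \otimes \tfrac{\nabla^i}{i!}(c_k)$ extended to all monomials by induction. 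You instead identify $H^*(BT_{PU_n})^W$ with $PH^*(BU_n)$ directly by restricting $\mu$ to the maximal torus (where primitivity visibly means invariance under the diagonal translation, i.e.\ being a polynomial in the differences $v_{i+1}-v_i$), and you get $PH^*(BU_n) = \opn{Ker}(\nabla_{\!n})$ from the coassociativity identity $\phi_a\phi_b = \binom{a+b}{a}\phi_{a+b}$ rather than from Toda's explicit formula. Both of your substitutions are correct and arguably cleaner: the torus argument bypasses Toda's formula for the proof that $PH^*$ matches the Weyl invariants, and coassociativity plus torsion-freeness of $H^*(BU_n)$ is a slicker way to get from $\phi_1(c)=0$ to all $\phi_k(c)=0$. (Your extraction $\phi_1 = \nabla_{\!n}$ from the Chern-class formula for $E\otimes L$ is also fine.)

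The one place where you are not on solid ground is the identification $d_3^{0,*} = \nabla_{\!n}$ in the Serre spectral sequence of $BU_n \to BPU_n \to K(\Z,3)$. Your invocation of a ``general principle'' relating the first nonzero differential out of the $0$-column to the first-order part of the $\Omega(\mathrm{base})$-coaction on the fiber is the right intuition, but it is not a theorem you can simply cite in this generality, and you correctly flag it as the main obstacle. The paper does not prove this either; it cites \cite[Corollary 3.4]{gu2019cohomology}, which establishes $d_3(\xi\eta)=\nabla_{\!n}(\xi)x_1\eta$ by a direct spectral-sequence computation. So both proofs lean on an external input at this step, but the paper's input is a specific proved computation, whereas yours is an unproved meta-principle; to close the gap you would either reproduce the computation from \cite{gu2019cohomology} or give a careful proof of the holonomy principle in the form needed. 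One further point worth making explicit: $\mathrm{Im}((Bq_n)^*) = E_\infty^{0,*} \subseteq E_3^{0,*} = \opn{Ker}(d_3^{0,*})$ holds a priori, and the content of the last equality is that all higher differentials $d_r^{0,*}$, $r>3$, vanish; your chain of equalities does imply this, but it deserves to be noted as a nontrivial corollary rather than passed over.
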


We briefly discuss 
some consequences of Theorem \ref{thm:6groups}.
The Weyl group invariants $H^*(BT_{PU_n})^W$ are described algebraically by $\opn{Ker}(\nabla_{\!n})$ and topologically by $PH^*(BU_n)$.
The equality $\mathrm{Im}((Bq)^*) = \opn{Ker}(d^{0, *}_3)$ entails that all  differentials $d^{0, *}_r$ for $r\!\,>\!\,3$ vanish in the Leray\textendash Serre spectral sequence of the fibration in \eqref{eq:BPUn}.
In Section~\ref{s:primitive}, the isomorphism $FH^*(BPU_n) \cong H^*(BT_{PU_n})^W$ follows from a split short exact sequence of graded abelian groups 
\begin{equation} \label{eq:A}
0 \to TH^*(BPU_n) \to H^*(BPU_n) \xra{\rho_{PU_n}}  H^*(BT_{PU_n})^W \to 0.
\end{equation}
Vistoli \cite{vistoli2007cohomology} has constructed a splitting of the
above sequence as graded rings in the case that $n$ is an odd prime,
however the situation for general $n$ is open. Hence finding natural splittings of $\rho_{PU_n}$, 
i.e., finding natural constructions of characteristic classes in $H^*(BPU_n)$ which map
onto the Weyl group invariants, remains an interesting open problem; see Remark~\ref{rem:pcc}.

The locations of the proofs of the existence of the isomorphisms Theorem \ref{thm:6groups} are summarised in Diagram~\eqref{eq:All} below. Each isomorphism is labeled with the result most relevant to its proof:

\begin{equation}\label{eq:All}
    \begin{tikzcd}[column sep = 35pt]
    & &  \opn{Im}((Bq)^*) \arrow[d,"\cong", "\mathrm{Prop}\ 4.4"'] \\
    FH^*(BPU_n)  \arrow[r, "\cong","\mathrm{Thm}\ 1.1~"'] &
    H^*(BT_{PU_n})^W \arrow[r,"\cong", "\mathrm{Lem}\ 4.3"'] &
    \opn{Ker}(\nabla_n)  &
    PH^*(BU_n) \arrow[l, "\cong"', "\mathrm{~Prop}\ 4.7"] \\
    & & \opn{Ker}(d^{0, *}_3) \arrow[u, "\cong", "\text{\cite[Cor.\,3.4]{gu2019cohomology}}"']
    \end{tikzcd}
\end{equation}



\subsection{General sufficient conditions for $\rho_G$ to be onto} \label{ss:outline}
We will prove Theorem~\ref{thm:mainPU} by establishing general sufficient
conditions for $\rho_G$ to be onto and verifying that they hold for $G = PU_n$.
Recall that we write $N$ for the normalizer of a maximal torus $T$ of $G$ and consider the inclusion $N \to G$.
As $G$ and $N$ have the same maximal torus
and Weyl group,
the natural map $\nu \colon BN \to BG$
induces an isomorphism $\nu^* \colon H^*(BG; \Q) \to H^*(BN; \Q)$.
Since $G/N$ has Euler characteristic $1$ (see \cite[\S 6]{BECKER19751}),
the composition
\[ H^*(BG) \xra{\nu^*} H^*(BN) \xra{\tau^*}H^*(BG) \]
is the identity, where $\tau^*$ denotes
the transfer in cohomology \cite[Theorem 5.5]{BECKER19751}.
Hence $\nu^* \colon H^*(BG) \to H^*(BN)$ is a split injection; see, for example,
\cite[Proposition A.1]{curtis-wiederhold-williams1974} and its proof.
It follows that the induced map
$$ FH^*(BG) \to FH^*(BN)$$
is an isomorphism. 
Since $H^*(BT)$ is torsion--free, so is $H^*(BT)^W$ and 
it follows that the natural
map $H^*(BN) \to H^*(BT)^W$ factors through the induced map $FH^*(BN) \to H^*(BT)^W$.
Thus we have proven

\begin{lemma} \label{lem:BG_to_BN}
The restriction homomorphism $\rho_G \colon H^*(BG) \to H^*(BT)^W$ is onto if and only
if
$\rho_N \colon H^*(BN) \to H^*(BT)^W$ is onto. \qed
\end{lemma}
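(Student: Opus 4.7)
The plan is to exploit the factorization $\rho_G = \rho_N \circ \nu^*$ coming from the tower of inclusions $T \subseteq N \subseteq G$, where $\nu \colon BN \to BG$ is the map of classifying spaces. The ``only if'' direction is then immediate since $\mathrm{Im}(\rho_G) \subseteq \mathrm{Im}(\rho_N)$ from the factorization.

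For the converse, I would extract two key properties of $\nu^*$. First, because $G$ and $N$ share the same maximal torus $T$ and Weyl group $W$, both $\rho_{G;\Q}$ and $\rho_{N;\Q}$ are isomorphisms onto $H^*(BT;\Q)^W$, which forces $\nu^* \otimes \Q$ to be an isomorphism. Second, since $G/N$ has Euler characteristic $1$, the Becker-Gottlieb transfer yields a map $\tau^* \colon H^*(BN) \to H^*(BG)$ with $\tau^* \circ \nu^* = \mathrm{id}$, making $\nu^*$ split injective integrally. Together these two facts force the cokernel of $\nu^*$ to be pure torsion, so that the induced map $FH^*(BG) \to FH^*(BN)$ on torsion-free quotients is an isomorphism.

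To conclude, I would observe that $H^*(BT)^W$ is torsion-free, being a subgroup of the torsion-free ring $H^*(BT)$, so both $\rho_G$ and $\rho_N$ annihilate their torsion subgroups and factor through $FH^*(BG)$ and $FH^*(BN)$ respectively. Under the identification $FH^*(BG) \cong FH^*(BN)$ established above, these two factored maps coincide, hence have the same image. Therefore $\rho_G$ is surjective precisely when $\rho_N$ is. The key technical ingredient is combining the rational isomorphism with the integral splitting to conclude that $\nu^*$ has torsion cokernel; both inputs are classical, so no serious obstacle arises beyond tracking that the diagram of factored maps actually commutes.
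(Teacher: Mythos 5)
Your proposal is correct and follows essentially the same route as the paper: the Becker--Gottlieb transfer gives that $\nu^*$ is split injective, the rational statement forces the cokernel to be torsion, hence $FH^*(BG)\to FH^*(BN)$ is an isomorphism, and torsion-freeness of $H^*(BT)^W$ lets you pass to the torsion-free quotients. The only cosmetic difference is that you derive the rational isomorphism of $\nu^*$ from the factorization through $\rho_{G;\Q}$ and $\rho_{N;\Q}$ (both of which are isomorphisms onto $H^*(BT;\Q)^W$), whereas the paper asserts it directly from $G$ and $N$ sharing the same maximal torus and Weyl group — the same fact, stated from the other side.
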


By definition, there is a short exact sequence $1 \to T \to N \xra{\psi} W \to 1$,
and to investigate when $\rho_N$ is onto,
we consider a more general situation,
where there is a short exact sequence of topological groups,
\begin{equation} \label{eq:AMV}
1\to A\to M\xrightarrow{\varphi} V\to 1,
\end{equation}
with $A \subseteq M$ a closed normal subgroup and $V$ a finite discrete group. We further assume that each of the above topological groups  with its unit forms an NDR pair (6.2 of \cite{steenrod1967convenient}). This condition is satisfied when $A$ is a maximal torus of a compact Lie group and $M$ is the normalizer of $A$.
%
%
Up until now, we have used that the assignment $G \mapsto H^*(BG;R)$ defines a contravariant functor from the category of Lie groups to the category of $R$-algebras. By fixing a functorial construction of $BG$, this functor can be factored through the 
category of 
topological spaces. However, we will need to consider more general models for classifying spaces
and so we introduce the following definitions and related terminology.

\begin{definition}\label{def:model}
Let $\Gamma$ be a topological group. A {\em model for $B\Gamma$} is a universal principal $\Gamma$-bundle $p_X \colon \tilde{X}\to X$, and if $X$ is a $CW$-complex then we call $p_X$ a {\em $CW$-model for $B\Gamma$}.
If $p_X \colon \tilde{X}\to X$ and $p_Y \colon \tilde{Y}\to Y$ are respectively models for the classifying spaces $B\Gamma_1$ and $B\Gamma_2$ of topological groups $\Gamma_1$ and $\Gamma_2$, 
and if $\varphi \colon \Gamma_1 \to \Gamma_2$ is a continuous homomorphism, then a map $f \colon X \to Y$ is a 
{\em model for $B\varphi\colon B\Gamma_1 \to B\Gamma_2$} if the pull-back principal $\Gamma_2$-bundle $f^*p_Y$ is isomorphic to the principal $\Gamma_2$-bundle $\tilde{X}\times_{\Gamma_1}\Gamma_2\to X$. 
\end{definition}

The extension $A \to M \xra{\varphi} V$ in~\eqref{eq:AMV} ensures there are models for $BA$ with $V$-actions (see Section~\ref{s:LHSSS}),
and fixing a model for $BV$, we obtain
the homotopy orbit space $BA/\!/V \coloneqq EV \times_V BA$,
which comes equipped with a projection map $BA/\!/V\to BV$.
In many cases $BA/\!/V$ is a model for $BM$, which leads us to

\begin{definition} \label{def:adapted}
A model $p \colon EA \to BA$ for $BA$, where $V$ acts on $BA$
is called {\em $\varphi$-adapted} if the following conditions hold:
\begin{enumerate}
    \item There is a universal principal $M$-bundle  $EM\to BA/\!/V$, which is a model for $BM$;
    \item The natural projection $BA/\!/V\to BV$ is a model for $B\varphi \colon BM \to BV$.
\end{enumerate}
\end{definition}

In Section~\ref{s:LHSSS} we recall 
arguments showing that 
the map classifying $A \to M$ has a model $\pi \colon BA \to BM$ in which $\pi$ is a $V$-covering space and $BM$ is a $CW$-complex. 
The $CW$-structure on $BM$ can then be lifted to $BA$ 
(see Lemma~\ref{lem:CellLifting}), 
making $BA$ a $V$-$CW$-complex, 
i.e., a $CW$-complex with a cellular $V$-action, where a set-wise fixed cell is also point-wise fixed.  
As we will encounter cellular $V$-actions which do not define $V$-$CW$-complexes, we make the following

\begin{definition}\label{def:weakCW}
A {\em weak $V$-$CW$-complex} is a $CW$-complex with a cellular $V$-action.
\end{definition}
%

\noindent
In Lemma~\ref{lem:BM}, we prove that $\varphi$-adapted $V$-$CW$-models 
for $BA$ (\emph{a fortiori} weak $V$-$CW$-models for $BA$) always exist.

\begin{theorem} \label{thm:main}
Let $1\to A\to M\xrightarrow{\varphi}V\to 1$
%
%
be a short exact sequence of topological groups, where 
$V$ is finite and discrete. Then the following are equivalent:
\begin{enumerate}
\item\label{ite:1}
The natural map $H^*(BM; R) \to H^*(BA; R)^V$ is onto;
\item\label{ite:2}
For some $\varphi$-adapted weak $V$-$CW$-model of $BA$,
the natural map of $V$-invariants
\[Z^*(BA; R)^V\rightarrow H^*(BA; R)^V\]
is onto.
\end{enumerate}
\end{theorem}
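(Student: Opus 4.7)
The plan is to prove both implications of the equivalence, with the main content in the direction $(2)\Rightarrow(1)$. The forward direction $(1)\Rightarrow(2)$ is essentially the content of the paragraph preceding the theorem: I would choose any CW structure on $BM$ and lift it along the $V$-fold covering $\pi\colon BA\to BM$, which yields a strict free (hence weak) $V$-CW structure with $BA/\!/V\simeq BA/V=BM$. Given $x\in H^*(BA;R)^V$, hypothesis (1) provides $y\in H^*(BM;R)$ with $\pi^*y=x$, and pulling back any cocycle representative of $y$ produces a $V$-invariant cocycle on $BA$ representing $x$, so the cocycle surjection in (2) holds.

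For $(2)\Rightarrow(1)$ the plan is to compute $H^*(BM;R)=H^*(BA/\!/V;R)$ via the bicomplex of the Borel construction. After replacing the given $BA$ by $EV\times BA$ with the diagonal $V$-action, which is free and strictly cellular and leaves $BA/\!/V$ unchanged up to homotopy, the cellular cochains of the quotient $(EV\times BA)/V$ decompose as
\[K^{p,q}=\opn{Hom}_V\bigl(C_p(EV),C^q(BA;R)\bigr),\]
with vertical differential $\delta_{BA}$ and horizontal differential precomposition with $\partial\colon C_{p+1}(EV)\to C_p(EV)$. The horizontal filtration produces the Serre spectral sequence of $BA\to BM\to BV$ with $E_2^{p,q}=H^p(V;H^q(BA;R))$, and its $q=0$ edge map is precisely the natural map $H^*(BM;R)\to H^*(BA;R)^V$ appearing in (1).

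The key step is then an explicit cocycle lift. Given $x\in H^*(BA;R)^V$, hypothesis (2) provides a representative $c\in Z^*(BA;R)^V$, and I define $\wt c\in K^{0,*}$ as the unique $V$-equivariant map sending a chosen $V$-basis element $e_0\in C_0(EV)$ to $c$. Then $d_v\wt c=0$ is immediate from $\delta c=0$, while $d_h\wt c=0$ follows from $V$-invariance of $c$ together with the fact that $\opn{im}\partial\subset C_0(EV)$ is the augmentation ideal, generated by elements of the form $g\cdot e_0-e_0$ with $g\in V$. Hence $\wt c$ is a total cocycle whose class in $H^*(BM;R)$ maps to $x$ under the edge map, proving surjectivity. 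The main obstacle to watch out for is justifying the bicomplex identification of $H^*(BM;R)$ when the given $V$-CW structure on $BA$ is only weak (so that cellular cochains on the quotient need not coincide with $V$-invariant cochains on $BA$), and this is precisely what the replacement of $BA$ by $EV\times BA$ at the start of this direction achieves.
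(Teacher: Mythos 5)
Your proof of $(1)\Rightarrow(2)$ is the same as the paper's (the paragraph preceding the theorem), but your argument for $(2)\Rightarrow(1)$ is genuinely different and, I think, a nice alternative.

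The paper stays entirely at the level of cohomology classes and spectral sequences: with $X$ a weak $V$-CW model for $BA$, it introduces for each $k$ the auxiliary fibration $X_k\to EV\times_V X_k\to BV$ with $X_k=X/X^{(k-1)}$, observes that $H^k(X_k)^V\cong Z^k(X)^V$ so that hypothesis (2) gives a surjection $_kE_2^{0,k}\to E_2^{0,k}$ of $E_2$-terms, and then uses the $(k{-}1)$-connectivity of $X_k$ together with the section of $\pi_k$ (the cone point of $X_k$ is $V$-fixed) to show that $_kE_2^{0,k}$ consists of permanent cycles; naturality then forces $E_2^{0,k}=E_\infty^{0,k}$. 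Your argument instead works at the cochain level: after replacing $X$ by $EV\times X$ (noting the diagonal action is free, hence strictly cellular, and that $V$-invariant cocycles pull back to $V$-invariant cocycles along the cellular projection, so hypothesis (2) is preserved), you identify $C^*(BM;R)$ with the total complex of $K^{p,q}=\opn{Hom}_V(C_p(EV),C^q(BA;R))$ and write down, for each $V$-invariant cocycle $c$ representing a given $x\in H^*(BA;R)^V$, the explicit total cocycle $\wt c=c\cdot\epsilon\in K^{0,*}$ ($\epsilon$ the augmentation of $C_0(EV)$); one checks $d_v\wt c=d_h\wt c=0$ and that $[\wt c]$ restricts to $x$. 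This avoids the auxiliary spectral sequences and skeletal filtration entirely, at the modest cost of invoking the standard identification of the column-filtration spectral sequence of this bicomplex with the Serre spectral sequence of $BA\to BM\to BV$ and of its edge map with the restriction $H^*(BM;R)\to H^*(BA;R)^V$. (Two small slips of no mathematical consequence: the relevant edge is the $p=0$ column, not ``$q=0$''; and $\opn{im}\partial\subset C_0(EV)$ is the augmentation ideal, so the cleanest reason $d_h\wt c=0$ is simply $\epsilon\circ\partial=0$ together with $\wt c=c\cdot\epsilon$.) Both proofs are correct; yours is the more direct and arguably more transparent account of why a $V$-invariant cocycle representative is exactly what is needed to kill the differentials out of the fiber column.
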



As a direct consequence of Lemma \ref{lem:BG_to_BN} and Theorem \ref{thm:main} we have

\begin{corollary} \label{cor:main}
Let $G$ be a connected compact Lie group with $N$ the normalizer of the maximal torus $T$ and $\psi \colon N \to W = N/T$ the
surjection to the Weyl group. 
Then the 
restriction homomorphism
$\rho_G \colon H^*(BG) \to H^*(BT)^W$ is onto if and only if there is a $\psi$-adapted weak $W$-$CW$-model for $BT$
such that the induced map of $W$-invariants
\[Z^*(BT)^{W}\rightarrow H^*(BT)^{W}\]
is onto. \qed
\end{corollary}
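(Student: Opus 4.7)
The corollary is a direct specialization of Theorem \ref{thm:main} to the normaliser extension combined with Lemma \ref{lem:BG_to_BN}, so the proof is essentially a matter of correctly identifying the pieces. First I would observe that for any connected compact Lie group $G$ the normaliser extension
\[1 \to T \to N \to W \to 1\]
is a short exact sequence of topological groups of exactly the type treated by Theorem \ref{thm:main}: the maximal torus $T$ is a closed normal subgroup of $N$, and the Weyl group $W = N/T$ is a finite discrete group. Hence Theorem \ref{thm:main} applies with $A = T$, $M = N$, $V = W$, and $R = \Z$.

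Next I would match the hypothesis of the corollary with condition \eqref{ite:2} of Theorem \ref{thm:main}: we are assuming that there is a model for $BT$ which is a weak $W$-$CW$-complex such that $BT/\!/W \to BW$ is a model for $BN \to BW$, and such that the natural map $Z^*(BT)^W \to H^*(BT)^W$ is surjective. The equivalence \eqref{ite:1} $\Leftrightarrow$ \eqref{ite:2} of Theorem \ref{thm:main} then yields that the natural map $H^*(BN) \to H^*(BT)^W$ is surjective, i.e.\ $\rho_N$ is onto. Finally, Lemma \ref{lem:BG_to_BN} states that $\rho_G$ is onto if and only if $\rho_N$ is, so $\rho_G$ is onto, as required. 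The work has been entirely outsourced to Theorem \ref{thm:main} and Lemma \ref{lem:BG_to_BN}, so no substantive obstacle remains at this step; any real difficulty lies in the proof of Theorem \ref{thm:main} itself, not in its present application.
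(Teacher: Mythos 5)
Your proof is correct and matches the paper exactly: the paper states Corollary~\ref{cor:main} as a ``direct consequence of Lemma~\ref{lem:BG_to_BN} and Theorem~\ref{thm:main}'' with no further argument, and you have correctly identified the specialization $A = T$, $M = N$, $V = W$, $R = \Z$ and the chain of implications (hypothesis $\Rightarrow$ condition~\eqref{ite:2} $\Rightarrow$ condition~\eqref{ite:1} $\Rightarrow$ $\rho_N$ onto $\Rightarrow$ $\rho_G$ onto).
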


\noindent
We will prove Theorem \ref{thm:mainPU} by showing that $BT_{PU_n}$ has a weak $W$-$CW$-model
satisfying the condition of Corollary \ref{cor:main}; see Section \ref{s:CWdecomp}.

%


As mentioned earlier, Benson and Wood \cite{benson-wood1995} proved that Condition \eqref{ite:1} of Theorem \ref{thm:main} fails for $G = Spin_n$, when $n\!>\!6$ and $n \equiv 3,4,5\pmod{8}$. Therefore, Theorem \ref{thm:main} imposes restrictions on possible $W$-equivariant $CW$-decompositions of the classifying space of the maximal torus of $G$. More precisely, we have the following

\begin{corollary}\label{cor:BSpin}
Let $G = Spin_n$ and $\psi \colon N \to W$ be the projection to
the Weyl group.
For $n\!>\!6$ and $n \equiv 3,4,5\pmod{8}$,
there is no $\psi$-adapted weak $W$-$CW$-complex model of $BT$
such that the homomorphism of $W$-invariants
\[Z^*(BT)^W\to H^*(BT)^W\]
is onto. \qed
\end{corollary}

Tits \cite{tits1966} and independently Curtis, Wiederhold and Williams \cite{curtis-wiederhold-williams1974}, showed that for $G = Spin_n$, $n > 2$, the canonical surjection $N \to W$ does not split, 
and the results of Benson and Wood \cite{benson-wood1995}
show that in these cases $\rho_{Spin_n}$ is sometimes onto and sometimes not onto.
At the time of writing, there is no known 
$G$ for which $N \to W$ splits but $\rho_G$ is not onto.
Hence we ask the following

\begin{question}\label{q:rho_G_onto}
If $G$ is a connected compact Lie Group and $N \cong T \rtimes W$, is $\rho_G$ onto? 
Equivalently, does $BT$ admit a $\psi$-adapted weak $W$-$CW$-structure
where $Z^*(BT)^W \to H^*(BT)^W$ is onto?
\end{question}

When $G$ is not a connected compact Lie group, 
there are examples of
Benson and Feshbach \cite{benson1994cohomology}, where the short exact sequence
$1\to A \to M \xrightarrow{\varphi} V \to 1$
splits, but the conclusion of Theorem~\ref{thm:main} is not satisfied.
Specifically, for each positive integer $n$, Benson and Feshbach 
constructed a finite discrete group $M_n$ with a split extension
\[1 \to A_n \to M_n \xra{\varphi} \Z/2 \to 1,\]
where $A_n$ is a non-abelian group, and where 
the differential $d_n^{0,n-1}$ is nontrivial
in the Lyndon--Hochschild--Serre spectral sequence
\begin{equation*}
    \begin{split}
        & E_2^{s,t}=H^s(B\Z/2;H^t(BA_n;\F_2)_\phi)\Rightarrow H^{s+t}(BM_n;\F_2).\\
    \end{split}
\end{equation*}
Here the local coefficient system $H^t(BA_n;\F_2)_\phi$ is twisted by the action $\phi$ of the fundamental group $\pi_1(B\Z/2) = \Z/2$ on 
$H^*(BA_n; \F_2)$,
and it follows that the
restriction homomorphism 
$H^*(BM_n; \F_2) \to H^*(BA_n; \F_2)^{\Z/2}$
factors through $\Ker(d_n^{0, n-1})$.
Since $d_n^{0,n-1}$ is nontrivial, the restriction homomorphism 
is not surjective, and so by Theorem \ref{thm:main}, we have

\begin{corollary}\label{cor:BF}
For the Benson--Feshbach groups
$M_n  \cong A_n \rtimes \Z/2$,
there is no $\varphi$-adapted weak 
$(\Z/2)$-$CW$-complex structure on $BA_n$
such that
the canonical homomorphism
\[Z^*(BA_n; \F_2)^{\Z/2}\to H^*(BA_n; \F_2)^{\Z/2}\]
is onto. \qed
\end{corollary}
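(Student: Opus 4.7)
The plan is to apply Theorem \ref{thm:main} in its contrapositive form. We take the split short exact sequence of Benson and Feshbach,
\[1 \to A_n \to M_n \to \Z/2 \to 1,\]
with the discrete topology on all groups. Then $V = \Z/2$ is a finite discrete group and $A_n$ is a closed (in fact open) subgroup of $M_n$, so the hypotheses of Theorem~\ref{thm:main} are satisfied; we apply it with coefficient ring $R = \F_2$.

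The Benson--Feshbach result recalled above asserts precisely that condition \eqref{ite:1} of Theorem~\ref{thm:main} fails for this data: the natural map $H^*(BM_n; \F_2) \to H^*(BA_n; \F_2)^{\Z/2}$ is not surjective. Since Theorem~\ref{thm:main} establishes the equivalence of \eqref{ite:1} and \eqref{ite:2}, condition \eqref{ite:2} must also fail. Unwinding \eqref{ite:2} in this setting, the failure says that there is no weak $(\Z/2)$-$CW$-complex structure on $BA_n$ for which $BA_n/\!/(\Z/2) \to B(\Z/2)$ simultaneously models $BM_n \to B(\Z/2)$ and for which $Z^*(BA_n;\F_2)^{\Z/2} \to H^*(BA_n;\F_2)^{\Z/2}$ is onto, which is exactly the conclusion of Corollary~\ref{cor:BF}.

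There is no genuine obstacle once Theorem~\ref{thm:main} is available: the corollary is the contrapositive of one direction of that theorem, specialised to the Benson--Feshbach counterexample. The only points to check are the hypotheses of Theorem~\ref{thm:main} (discreteness of $V$ and closedness of $A$ in $M$), both of which are immediate in this finite-group context.
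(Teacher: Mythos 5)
Your proposal is correct and follows exactly the same route as the paper: take the Benson--Feshbach split extension with discrete topology and $R=\F_2$, note (as recalled in the introduction and re-derived in Section~\ref{sec:Benson} via the nontrivial LHS differential $d_n^{0,n-1}$) that condition~\eqref{ite:1} of Theorem~\ref{thm:main} fails, and conclude by the equivalence of Theorem~\ref{thm:main} that condition~\eqref{ite:2} must fail. The only difference is that the paper also sketches why the Benson--Feshbach map fails to be surjective, but that is just a recollection of their result and not a new step.
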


As a final comment, we point out that determining the cohomology of $PU_n$ is already a delicate problem. When $p$ is a prime, the mod $p$ cohomology ring $H^*(PU_n; \Z/p)$ was determined by Baum and Browder \cite{baum-browder1965}.
The integral cohomology ring $H^*(PU_n)$ was completely determined by Duan \cite{duan2017},
and the complexity of $H^*(PU_n)$ is one of the reasons that 
the computation of $H^*(BPU_n)$ remains a difficult problem in general.

\subsection*{Organisation}
In Section \ref{s:LHSSS} we prove Theorem \ref{thm:main} by studying the Serre spectral sequence of the fibration $BA \to BM \to BV$ and related spectral sequences.
In Section \ref{s:CWdecomp} we prove there are
$\psi$-adapted weak $W$-$CW$-complex models for
$BT_{PU_n}$, proving Theorem \ref{thm:mainPU}.
Section \ref{s:primitive} discusses the Weyl group invariants $H^*(T_{PU_n})^W$,
proving Corollary~\ref{cor:Bq} and Theorem~\ref{thm:6groups}.
We conclude with appendicies on $CW$-structures on the total spaces of covering spaces and on Milgrams construction of the classifying space of a topological group.

\subsection*{Acknowledgements}
We would like to thank Dave Benson and Adrian Hendrawan for helpful comments on
earlier versions of this paper,
and also the anonymous referees for numerous helpful comments, which have inspired significant revisions.
The second author would like to thank the Max Planck Institute for Mathematics in Bonn for its hospitality and support.

\section{Homotopy orbit firbrations 
for weak $V$-$CW$-complexes} \label{s:LHSSS}
%
%
%
%
%
In this section we prove Theorem \ref{thm:main}.
Recall that $\varphi \colon M \to V$
is a continuous surjective homomorphism of topological groups, where $V$ is finite and
discrete, and that $A \subseteq M$ is the kernel of $\varphi$.
Recall also that if $p \colon EA \to BA$ is a model for $BA$, 
then $p$ is called $\varphi$-adapted if there is a $V$-action on $BA$ with a universal principal $M$-bundle on $BA/\!/V$ such that the natural map $BA/\!/V \to BV$ is a model for $B\varphi \colon BM \to BV$.
%
%
%





\begin{lemma} \label{lem:BM}
For all $\varphi \colon M \to V$,
$\varphi$-adapted $V$-$CW$-models for $BA$ exist.
%
%
\end{lemma}

\begin{proof}
Since $V$ is finite and discrete,
there is a model $EV \to BV$ for $BV$, 
which is a $V$-covering space.
Since classifying spaces are well defined only up to weak homotopy equivalence, 
$BM$ has a model which is a $CW$-complex.
Suppose that $f \colon BM \to BV$ is a map such that $f_* \colon \pi_1(BM) \to \pi_1(BV) = V$
is equal to the induced map $\varphi_* \colon \pi_0(M) \to \pi_0(V) = V$:
such a map $f$ exists since $BV$ is a $K(V, 1)$.
Now let $X \to BM$ be the pullback of the $V$-covering space $EV \to BV$.
By Lemma \ref{lem:Gcell}, the $CW$-structure on $BM$ pulls back to define a $V$-$CW$-structure on $X$.
We have $BM = X/V$,
and in the commutative diagram
\begin{equation*}
    \begin{tikzcd}
        X/\!/V \arrow[rr] \arrow[rd] & & X/V \arrow[ld, "f"] \\
              & BV &,
    \end{tikzcd}
\end{equation*}
the canonical map $X/\!/V \to X/V = BM$ is a weak homotopy equivalence, since the $V$-action of $X$ is free.  It follows that $X/\!/V \to BV$ is a model for $B\varphi \colon BM \to BV$, and so $X$ is a $\varphi$-adapted $V$-$CW$-model for $BA$.
\end{proof}

\begin{proof}[Proof of Theorem \ref{thm:main}]
Throughout this proof, we suppress the coefficient ring $R$.

We first show that Condition (1) implies Condition (2).  By the proof of Lemma~\ref{lem:BM}, there is a $\varphi$-adapted $V$-$CW$-model for $BA$ with $BM = BA/V$.  In this case the quotient map
$\pi \colon BA \to BM = BA/V$ is such that
\[ \pi^{\#} \colon Z^*(BM) \to Z^*(BA), \]
%
takes values in $Z^*(BA)^V$. Consider the commutative diagram
\begin{equation}\label{eq:ZBM_HBM_diagram}
    \begin{tikzcd}
        Z^*(BA)\arrow[d, two heads]\arrow[r, "\pi^{\#}"]& Z^*(BA)^V\arrow[d]\\
        H^*(BM)\arrow[r, "\pi^*"]& H^*(BA)^V
    \end{tikzcd}    
\end{equation}
where the vertical arrow on the left is the quotient homomorphism, which is onto. In addition, Condition (1) states that  the horizontal arrow on the bottom
\[ \pi^* \colon H^*(BM) \to H^*(BA)^V\]
is onto. Therefore, so is the vertical arrow on the right. 

To show that Condition (2) implies Condition (1), 
we suppose that $X$ is a $\varphi$-adapted weak $V$-$CW$-model
for $BA$ such that $Z^*(X)^V \to H^*(X)^V$ is onto, 
and consider the 
Serre spectral sequence associated to 
the homotopy fibre sequence $X \to X/\!\!/V \to BV$,
with $E_2$~page $E_2^{s,t}\cong H^s(BV;H^t(X)_{\phi})$ and converging to $H^*(X/\!\!/V)$. Here $H^t(X)_{\phi}$ is the local coefficient system given by the canonical action $\phi$ of $V = \pi_1(BV)$ on $H^*(X)$.
%

On the $E_2$-page we have the leftmost column
$E^{0, k}_2 \cong H^k(X)^V$ and on the $E_\infty$-page
we have that $E^{0, k}_\infty = 
\mathrm{Im}(H^k(X/\!\!/V) \to H^k(X)^V)$.
Since the $V$-action on $X$ is $\varphi$-adapted,
we may take $BM = X/\!\!/V$, 
and so it suffices to show that all differentials coming out of the $0$th column are trivial.

For $k = 0$, all differentials leaving $E^{0, 0}_*$ vanish for dimensional reasons.
To understand the differentials for $k \geq 1$, 
we note that $V$ acts on $X^{(k-1)}$, the $(k{-}1)$-skeleton of $X$, and so on the quotient $CW$-complex 
$X_{k} \coloneqq X/X^{(k-1)}$.
It follows that $X_k$ is weak $V$-$CW$-complex and 
that the collapse map $\col_k \colon X \to X_k$ is cellular and $V$-equivariant.
Passing to homotopy orbits, we obtain 
the $V$-equivariant collapse map 
%
\[\col_k/\!/V \colon EV\times_V X\rightarrow EV\times_V X_{k},
\quad [e, x] \mapsto [e, \col_k(x)],
\]
which fits into the following commutative diagram of fibrations over $BV$:
%

\begin{equation}\label{eq:X fiberseq}
    \begin{tikzcd}
        X \arrow[r] \arrow[d,"\col_k"] &
    EV \times_V X\arrow[r, "\pi"] \arrow[d,"\col_k/\!/V"] &
BV\arrow[d, "\Id"] \\
X_{k}\arrow[r] & EV \times_V X_{k} \arrow[r,"\pi_{k}"] &
BV.
    \end{tikzcd}
\end{equation}

Moreover, for $k \geq 1$, the $V$-action on $X_{k}$ has a fixed point and so $\pi_{k}$ has a section.

Let $_{k}E_*^{*,*}$ be the Serre spectral sequence associated to the homotopy fiber sequence
\begin{equation}\label{eq:fiberprod}
X_{k}\rightarrow EV \times_V X_{k}\rightarrow BV.
\end{equation}
The commutative diagram \eqref{eq:X fiberseq} induces a morphism of spectral sequences
\[ E^{*,*}(\col_k/\!/V) \colon _{k}E_*^{*,*}\rightarrow E_*^{*,*},\]
which on the $E_2$-page is given by the homomorphism
\[ E^{s, t}(\col_k/\!/V) \colon H^s(BV;H^t(X_{k})_{\phi}) \rightarrow H^s(BV; H^t(X)_{\phi}). \]
%
In particular, for $(s, t) = (0, k)$, we have
\begin{equation}\label{eq:surj}
E^{0, k}(\col_k/\!/V)=(\col_k/\!/V)^* \colon _{k}E_2^{0,k}=H^k(X_k)^V=Z^k(X) \rightarrow H^k(X)^V=E_2^{0,k}
\end{equation}
which is part of the following commutative diagram:

\begin{equation}\label{eq:HBM_HBV_diagram}
    \begin{tikzcd}
        _kE_{\infty}^{0,k}\arrow[r]\arrow[d, " E^{*,*}(\col_k/\!/V)"']&_kE_2^{0,k}=Z^k(X)^V\arrow[d, two heads, " E^{*,*}(\col_k/\!/V)"]\\
        E_{\infty}^{0,k}\arrow[r]& E_2^{0,k}=H^k(X)^V.
    \end{tikzcd}    
\end{equation}
By Condition (2) of Theorem \ref{thm:main}, the vertical arrow on the left of \eqref{eq:HBM_HBV_diagram} is a surjection. By construction, $X_{k}$ is $(k{-}1)$-connected. Hence in the spectral sequence $_{k}E_*^{*,*}$ we have
\[_{k}E_{k}^{0,k}={_{k}E}_2^{0,k}.\]
On the other hand, the homotopy fiber sequence (\ref{eq:fiberprod}) has a section,
and therefore the differential
\[_{k+1}d_k^{0,k}:{_{k}E}_{2}^{0,k}={_{k}E}_{k}^{0,k}\rightarrow {_{k}E}_{k}^{k+1,0}\]
is trivial and the top horizontal arrow of \eqref{eq:HBM_HBV_diagram} is the identity. Therefore, the bottom horizontal arrow is surjective, which completes the 
proof that Condition (2) implies Condition (1).
\end{proof}

%

\begin{remark}
We conclude this section with a discussion of Condition (2) of Theorem~\ref{thm:main},
which requires that $BA$ has a $\varphi$-adapted weak 
$V$-$CW$ model $X$ in which the natural map 
$Z^*(X; R)^V \to H^*(X; R)^V$ is onto.
By definition, there is a short exact sequence of $RV$-modules
\begin{equation} \label{eq:coefficients}
 0 \to B^i(X; R) \to Z^i(X; R) \to H^i(X; R) \to 0,
\end{equation}
where $B^i(X; R) = \partial(C^{i-1}(X; R)) \subseteq Z^i(X; R)$
is the submodule of coboundaries.
Writing $B^i_X \coloneqq B^i(X; R), Z^i_X \coloneqq Z^i(X; R)$ and $H^i_X \coloneqq H^i(X; R)$
and regarding \eqref{eq:coefficients} as a short exact sequence of coefficient $RV$-modules, we have the
the exact sequence
\[ H^0(V; Z^i_X) \to H^0(V; H^i_X) \xra{\delta_X} H^1(V; B^i_X) \xra{i_X} H^1(V; Z^i_X). \]
Using the identifications $(Z^i_X)^V = H^0(V; Z^i_X)$ and $(H^i_X)^V = H^0(V; H^i_X)$ we see
the second condition of Theorem \ref{thm:main}
holds whenever there is $\varphi$-adapted 
weak $V$-CW model $X$ for $BA$ where $\delta_X = 0$, or
equivalently where $i_X$ is injective.
\end{remark}
%

\section{A weak $W$-$CW$-model for $BT_{PU_n}$} \label{s:CWdecomp}
The starting point of this section is the split short exact sequence of topological groups 
\[1\to T_{PU_n}\to N_{PU_n}\xrightarrow{\psi}W\to 1,\]
where $T_{PU_n}$ is a maximal subgroup of $PU_n$, $N_{PU_n}$ is the normalizer of $T_{PU_n}$ in $PU_n$, and $W$ is the Weyl group. We recall this construction as follows.

Fix a maximal torus $T_{U_n} = T^n \subset U_n$ and let 
$T_{PU_n} = T^{n-1} = T^n/S^1$ be the corresponding maximal torus of $PU_n$, where $S^1 \subset T^n = (S^1)^{\times n}$ is the diagonal. 
The Weyl group of $U_n$, $W = S_n$, acts by conjugation on $T^n$, permuting factors and the Weyl group of $PU_n$ acts on $T^{n-1}$ via the induced action, since $S^1$ is fixed by $W$.  Moreover, both normalizers $N_{U_n}$ and $N_{PU_n}$ are semi-direct products, $N_{U_n} = T^n \rtimes W$ and $N_{PU_n} = T^{n-1} \rtimes W$, and there is a commutative diagram of homomorphisms of topological groups
\begin{equation*}
    \begin{tikzcd}
       T^n = T_{U_n} \arrow[d,"q"] \arrow[r] &
        N_{U_n} \arrow[d] \arrow[r] & 
        W \arrow[d,"="] \\
       T^{n-1} = T_{PU_n} \arrow[r] &
        N_{PU_n} \arrow[r, "\psi"] &W.
    \end{tikzcd}
\end{equation*}
\noindent
Here, the rows are split exact, and 
the quotient homomorphism $q \colon T^n \to T^{n-1}$ is $W$-equivariant.
We have abused notation, using $q$ to also denote  
the restriction of $q \colon U_n \to PU_n$ to $T^n \subset U_n$.
In particular, $W$ acts on $T^{n}$ and $T^{n-1}$ via group automorphisms such that the homomorphism $q \colon T^n \to T^{n-1}$ is $W$-equivariant.

In this section, we construct a weak $W$-$CW$-complex model for $BT^{n-1}$ satisfying
Condition~(2) of Theorem \ref{thm:main};
i.e., we will prove
%

\begin{proposition}\label{pro:$CW$-decomp}
There is a $\psi$-adapted weak $W$-$CW$-model for $BT_{PU_n}$ such that
the induced homomorphism of $W$-invariants
\[Z^*(BT_{PU_n})^W\rightarrow H^*(BT_{PU_n})^W\]
%
is onto.  
%
\end{proposition}



%

\begin{remark}
The analogue of Proposition~\ref{pro:$CW$-decomp} is presumably well-known for $U_n$, and this is one of the starting points of our 
proof.  Indeed, as we discuss below, if $P$ denotes the standard $CW$-decomposition of $\CP$, $P^{\times n}$ is given the product $CW$-structure and $W = S_n$ acts on
$P^{\times n}$ by permuting co-ordinates, then $P^{\times n}$ is a weak $W$-$CW$-model for $BT_{U_n}$ 
where $Z^*(P^{\times n})^W \to H^*(P^{\times n})^W$ is an isomorphism.
\end{remark}

To prove Proposition~\ref{pro:$CW$-decomp} we will use Milgram's construction, $\EM \Gamma \to \BM \Gamma$, of the 
universal bundle of a topological group $\Gamma$. 
In Appendix \ref{sec:Milgram} 
we review the essential facts about Milgram's construction which we will need. By Proposition \ref{pro:BMfunctor}, the assignments $\Gamma \mapsto \EM \Gamma$ and $\Gamma \mapsto \BM \Gamma$ define functors from the category of topological groups to the category of topological spaces. 
Since $q\colon T^n\to T^{n-1}$ is $W$-equivariant and $B_M$ is a functor, it follows that the map  
$\BM q \colon \BM T_{U_n}\to \BM T_{PU_n}$ is $W$-equivariant.


By Proposition~\ref{pro:phi-adapted}, the $W$-action on $B_M T^{n-1}$ is $\psi$-adapted.
Turning to the $W$-action on $B_M T^n$,
since $T^n = (S^1)^{\times n}$,
Lemma \ref{pro:S_n_equiv} gives a $W$-equivariant homeomorphism $\zeta \colon (\BM S^1)^{\times n} \to \BM T^n$, where $W$ acts on $(\BM S^1)^{\times n}$ by permuting the coordinates.
Now $B_M S^1$ is a $K(\Z,2)$, and we let $P$ be a $CW$-complex of the same homotopy type with no odd-dimensional cells and one cell in each even dimension (e.g.\ we can take $P$ to be the standard $CW$-decomposition of $\CP$) and we let $\mu \colon P\to B_M T^1$ be a homotopy equivalence.
Summarizing the above discussion,
we have the following 

\begin{lemma}\label{lem:Bq-input}
Let $W = S_n$ act on $P^{\times n}$ by permuting the coordinates and on $\BM T^{n-1}$ as the Weyl group of $PU_n$. Then
\begin{enumerate}
\item The $W$-action on $\BM T^{n-1}$ is $\psi$-adapted;
\item
The composition
\[f \coloneqq (\BM q) \circ \zeta \circ (\mu^{\times n})
\colon P^{\times n}\to \BM T^{n-1}\]
is a $W$-equivariant map, which is a model for
$Bq \colon BT^n \to BT^{n-1}$. \qed
\end{enumerate}

\end{lemma}

The $W$-equivariant map
$f \colon P^{\times n} \to \BM T^{n-1}$,
is the starting point for our 
proof of Proposition~\ref{pro:$CW$-decomp}.
We will attach $W$-equivariant cells to $P^{\times n}$, using $f$ as a guide to build the desired $W$-$CW$-model for $BPU_n$.  To verify that the outcome of this process is fit for our purpose, and to carry it out in the first place, we require three preliminary lemmas.
The first of these is stated in the more general setting of Section~\ref{s:LHSSS}.  It uses the notation from that section
%
and is a simple observation, which can nonetheless be useful for determining when a weak $V$-$CW$ complex model for $BA$ is $\varphi$-adapted.

\begin{lemma} \label{lem:easy_obs}
Suppose that $X$ and $Y$ are models for $BA$ with $V$-actions
and that $f \colon X \to Y$ is a $V$-equivariant map 
which is a weak homotopy equivalence.
If the $V$-action on $Y$ is $\varphi$-adapted,
then so is the $V$-action on $X$.
%
\end{lemma}

\begin{proof}
Since $f$ is a $V$-equivariant map, it induces a map of homotopy quotients
$f/\!/V \colon X/\!/V \to Y/\!/V$, which fits into the following map of fibrations
\[
\xymatrix{
X \ar[d] \ar[r]^(0.5)f &
Y \ar[d] \\
X/\!/V \ar[d] \ar[r]^(0.5){f/\!/V} &
Y/\!/V \ar[d] \\
BV \ar[r]^= &
BV.}
\]
The 5-Lemma shows that $(f/\!/V)_* \colon \pi_*(X/\!/V) \to \pi_*(Y/\!/V)$ is an isomorphism. Therefore, $f/\!/V$ is a weak homotopy equivalence which commutes with the maps to $BV$, and the lemma follows.
\end{proof}

We next establish two elementary lemmas about cell attachments and extensions of maps over them.
Henceforth, we assume that all spaces and maps are pointed and that the domains of all maps are $CW$-complexes, where the base-point lies in the $0$-skeleton.
Let $Y$ be a $CW$-complex and $f \colon Y \to Z$ a map.
For simplicity, we assume that $Y$ and $Z$ are simply-connected. We use $f$ to regard $(Z, Y)$ as a pair. For $k \geq 2$, suppose there is a finitely generated free abelian group $F$ and a  homomorphism $p\colon F\to \pi_{k+1}(Z,Y)$. Let $\ul \gamma' = (\gamma_1', \dots,  \gamma_t')$ and 
$\ul \delta' = (\delta_1', \dots, \delta_t')$ be two bases of $F$ and set $\gamma_i := p(\gamma_i')$ and $\delta_i := p(\delta_i')$. 

We define the $CW$-complexes
\[ \ol Y_{\ul \gamma} := Y \cup \left( \cup_{i=1}^{t} e^{k+1}_i \right) 
\quad \text{and} \quad
\ol Y_{\ul \delta} := Y \cup \left( \cup_{i=1}^{t} e^{k+1}_i \right), 
\]
to be the result of attaching $(k{+}1)$-cells to $Y$ along the maps $\gamma_1|_{S^k}, \dots, \gamma_t|_{S^k}$,
resp.\ $\delta_1|_{S^k}, \dots, \delta_t|_{S^k}$, 
and we let $\wh f_{\ul \gamma} \colon Y_{\ul \gamma} \to Z$ and $\wh f_{\ul \delta} \colon Y_{\ul \delta} \to Z$ be the extensions of $f$ defined by $\ul\gamma = (\gamma_1, \dots, \gamma_t)$,
resp.\ $\ul \delta = (\delta_1, \dots, \delta_t)$. 

\begin{lemma} \label{l:cells}
In the situation above, there is a cellular homotopy equivalence of pairs
$g \colon (\ol Y_{\ul \gamma}, Y) \to (\ol Y_{\ul \delta}, Y)$ such that $g|_Y = \Id_Y$.  In addition, if $\pi_{k+1}(Z) = 0$, then
$\wh f_{\ul \gamma}$ is homotopic rel.\ $Y$ to $\wh f_{\ul \delta} \circ g$.
\end{lemma}

\begin{proof}
Let $A = (a_{ij})\in GL_t(\Z)$ be the change of basis matrix defined from $\ul \gamma'$ to $\ul \delta'$, i.e., $\ul \gamma' = \ul \delta'A$. 

There is a map $g_A \colon \vee_{i=1}^t S^k \to \vee_{i=1}^t S^k$ realizing $A$ on $\pi_k(\vee_{i=1}^t S^k) = \Z^t$, i.e., if $\iota_j\in\pi_k(\vee_{i=1}^t S^k)$ denotes the class represented by the inclusion of the $j$th copy of $S^k$, then 
\[(g_A)_*(\iota_j) = \sum_{i=1}^t a_{ij}\iota_i,\]
which implies 
\[\vee_{i=1}^t\gamma_i\simeq (\vee_{i=1}^t\delta_i)\cdot g_A\colon \vee_{i=1}^t S^k\to Y.\]
Therefore, we have a homotopy commutative diagram
\begin{equation*}
    \begin{tikzcd}
        \vee_{i=1}^t S^k\arrow[r, "\vee_{i=1}^t\gamma_i"]\arrow[d, "g_A"]& Y\arrow[r]\arrow[d,"="]& \ol Y_{\ul \gamma}\arrow[d, dashed, "g"]\\
        \vee_{i=1}^t S^k\arrow[r, "\vee_{i=1}^t\delta_i"]& Y\arrow[r]&\ol Y_{\ul \delta}.
    \end{tikzcd}
\end{equation*}
The dashed arrow $g$ exists because both rows are cofiber sequences. It is a weak equivalence because the other two vertical arrows are. By construction, $g \colon \ol Y_{\ul \gamma} \to \ol Y_{\ul \delta}$ is a cellular map. Therefore it is a homotopy equivalence by Whitehead's Theorem.

Finally, since 
$\wh f_{\ul \gamma}|_Y = (\wh f_{\ul \delta} \circ g)|_Y$, the obstruction to the existence of a homotopy rel.\ $Y$ between $\wh f_{\ul \gamma}$ and $\wh f_{\ul \delta} \circ g$ is a cocycle in $C^{k+1}(\ol Y_{\ul \gamma}; \pi_{k+1}(Z)) = 0$.  
By obstruction theory \cite[ Ch.\ 5, Theorem 5.15]{whitehead1978},
$\wh f_{\ul \gamma}$ is homotopic rel.\ $Y$ to $\wh f_{\ul \delta} \circ g$.
%
\end{proof}

Now let $V$ be a finite group, $Y$ a weak $V$-$CW$-complex  having only even-dimensional cells and a $V$-fixed $0$-cell, $Z$ a $V$-space and $f \colon Y \to Z$ a $V$-equivariant map.  
We suppose that $M \subseteq \pi_{k+1}(Z, Y)$ is a $\Z V$-module satisfying the following conditions:
\begin{enumerate}[label=\textbf{M\arabic*}]
    \item\label{ite:M1} As an abelian group, $M$ is free and finitely generated.
    \item\label{ite:M3} Let $\rho$ denote the Hurewicz homomorphism. The composition
    \[M\subseteq\pi_{k+1}(Z,Y)\xrightarrow{\del}\pi_k(Y)\xrightarrow{\rho}H_k(Y)\]
    is injective.
\end{enumerate}

Suppose there is a free $\Z V$-module $F$ with a basis $\gamma_1'',\cdots,\gamma_s''$ and a surjective $\Z V$-module homomorphism $p\colon F \to M$. The underlying abelian group of $F$ is free and of rank $t = s\abs{V}$. Let $\gamma_1',\cdots, \gamma_t'$ be the sequence formed by $v\gamma_i''$ for all $v\in V$ and $1\leq i\leq s$, which is a basis for the underlying free abelian group of $F$. Let $p(\gamma_i') = \gamma_i$. 
We define the weak $V$-$CW$-complex
\[ \wh Y \coloneqq  \ol Y_{\ul\gamma}, \]
and let $\wh f \colon \wh Y \to Z$ be the $V$-equivariant extension of $f$ defined by $\gamma_1, \dots, \gamma_t$.  
The group $\pi_{k+1}(\wh Y, Y)$ is identified with $F$, 
and we let $K$ denote the kernel of $p\colon F\to M$, which is a $\Z V$--module with an underlying free abelian group.  We denote its rank by $l$.

Let $\sigma \colon M \to F$ be a splitting of $p$ in the category of abelian groups and let 
$\delta_1, \dots, \delta_n \in M$ be a basis for the free abelian group $M$.  We define the $CW$-complex
\[ \ol Y : = \ol Y_{\ul\delta}\]
and $\bar f \colon \ol Y \to Z$ the extension of $f$ defined by $\delta_1, \dots, \delta_n$.

\begin{lemma} \label{l:W-cells}
In the situation above, if $\pi_{k+1}(Z) = 0$, then there is a (possibly non-equivariant) cellular homotopy equivalence 
$g \colon \wh Y \to \ol Y \vee \left( \vee_{i = 1}^l S^{k+1} \right)$ with $l = t - n$, and a map 
$f' \colon \vee_{i = 1}^l S^{k+1} \to Z$ 
 such that
\begin{enumerate}
\item The map $\wh f$ is homotopic to $(\bar f \vee f') \circ g$;
\item The inclusion $i_Y \colon Y \to \ol Y$ induces an isomorphism
$i_{Y*} \colon H_{k+1}(Y) \to H_{k+1}(\ol Y)$;
\item The composition 
\[ H_{k+1}(\wh Y) \xra{g_*} H_{k+1}(\ol Y \vee \left( \vee_{i= 1}^l S^{k+1} \right))
\xra{=} H_{k+1}(\ol Y) \oplus K
\xra{(i_Y)^{-1}_* \oplus \Id_K} H_{k+1}(Y) \oplus K\]
is an isomorphism of $\Z V$-modules;
%
\item Under the isomorphism above, $K$ is in the image of the Hurewicz homomorphism $\rho\colon \pi_{k+1}(\wh Y)\to H_{k+1}(\wh Y)$.
\end{enumerate}
\end{lemma}

\begin{proof}
(1) Set $\delta_i' := \sigma(\delta_i)$ for $1\leq i\leq n$, 
and let $\delta'_{n+1},\cdots,\delta'_t$ be a basis of the underlying abelian group of $K$. Then 
$\delta_1',\cdots, \delta_t'$ is a basis of the underlying abelian group of $F$ . 
For $n+1\leq i\leq t$, set $\delta_i := p(\delta'_i) = 0$. 

Order the elements of $V$ in a list $e = (g_1, \dots, g_{|V|})$.
We apply Lemma~\ref{l:cells} to 
$\ul \gamma = (\gamma_1, \dots,  \gamma_t)$ and 
$\ul \delta = (\delta_1, \dots, \delta_n, \delta_{n+1} \dots \delta_t) = (\ul \delta_{0}, \ul \delta_1)$.
Since $\wh Y = \ol Y_{\ul \gamma}$, we obtain a rel.\ $Y$ cellular homotopy equivalence
$g' \colon (\wh Y, Y) \to (\ol Y_{\ul \delta}, Y)$.
Now we set $\ol Y :=  \ol Y_{\ul \delta_0}$, and we consider $\ol Y_{\ul \delta}$ as obtained from 
$\ol Y_{\ul \delta_0}$ by attaching the $(k{+}1)$-cells indexed by $\ul \delta_1$.
Hence there is a rel.\ $\ol Y$ cellular homotopy equivalence
$g'' \colon (\ol Y_{\ul \delta}, \ol Y) \to (\ol Y \vee (\vee_{i=1}^l S^{k+1}), \ol Y)$.
We set $g \coloneqq  g'' \circ g'$, let $g^{-1}$ be a homotopy inverse for $g$ 
and define $f' : = f \circ (g^{-1}|_{\vee_{i=1}^l S^{k+1}})$.

(2) Consider the cellular chain complexes of $Y$, $\ol Y$ and $\wh Y$:  
$C_*(Y)$ and $C_*(\wh Y)$ are $\Z V$-module chain complexes while $C_*(\ol Y)$ is a chain complex of free abelian groups.  We have
\[ 
 C_j(\ol Y) \cong 
\begin{cases}
C_j(Y) & j \neq k{+}1, \\
C_j(Y) \oplus M & j = k{+}1,
\end{cases}  
\quad \text{and} \quad
C_j(\wh Y) \cong 
\begin{cases}
C_j(Y) & j \neq k{+}1, \\
C_j(Y) \oplus F & j = k{+}1.
\end{cases}  
\]
Since $Y$ has no odd-dimensional cells, the differentials of $C_*(Y)$ vanish
and consequently $C_*(Y) = H_*(Y)$. Hence we have the following commutative diagram,
\begin{equation}\label{eq:diagM}
    \begin{tikzcd}
        M\arrow[r, hook]&\pi_{k+1}(Z,Y)\arrow[r,"\del"]&\pi_k(Y)\arrow[rr,"\rho"]\arrow[d] & &H_k(Y)\arrow[d,"\cong"]\\
        & & C_{k+1}(\ol Y)\arrow[r,"d"]&C_k(\ol Y)\arrow[r,"="]& C_k(Y),
    \end{tikzcd}
\end{equation}
where the vertical arrow on the left is defined by taking $\del(\delta_i)$ to the cell with attaching map  $\del(\delta_i)$. Hence the only non-trivial differential in $C_*(\ol Y)$
is
\[d^{\ol Y}_{k+1} \colon C_{k+1}(\ol Y)\cong C_{k+1}(Y) \oplus M \to C_k(Y),\]
which vanishes on $C_{k+1}(Y)$
and $d^{\ol Y}_{k+1}|_M$ is injective by \ref{ite:M3}.
It follows that
\[H_{k+1}(\ol Y) \cong C_{k+1}(Y) = H_{k+1}(Y).\]


(3) For the weak $V$-$CW$-complex $\wh Y$, the only non-trivial differential in $C_*(\wh Y)$
is $d^{\wh Y}_{k+1} \colon C_{k+1}(Y) \oplus F\to C_k(Y)$, which vanishes on $C_{k+1}(Y)$
and is identified with $\rho\cdot\del\cdot p$ on $F$, since $C_k(Y) \cong H_k(Y)$.
It follows that there is a splitting of $\Z V$-modules 
$H_{k+1}(\wh Y) \cong H_{k+1}(Y) \oplus K$ and using Part~(2) to identify
$H_{k+1}(Y) = H_{k+1}(\ol Y)$ and endow $H_{k+1}(\ol Y)$ with a $\Z V$-module
structure, we have the splitting of $\Z V$-modules
$H_{k+1}(\wh Y) \cong H_{k+1}(\ol Y) \oplus K$.
Since $g$
is a cellular homotopy equivalence, the underlying splitting of abelian
groups is reflected in the isomorphism 
$g_* \colon H_{k+1}(\wh Y) \to 
H_{k+1}(\ol Y \vee \left( \vee_{j = 1}^l S^{k+1} \right)) = H_{k+1}(\ol Y) \oplus K$. 
Since
$H_{k+1}(Y)$ and $K$
are $\Z V$-submodules of $ H_{k+1}(\wh Y)$, 
and the proof is complete.

(4) The space
$\ol Y \vee \left( \vee_{j = 1}^l S^{k+1} \right)\simeq\ol Y_{\ul\delta}$ is constructed by attaching one $(k{+}1)$-cell for each $\delta_i'$. We abuse notation and let $\delta_i'$ also denote the class in 
\[\pi_{k+1}(\ol Y \vee \left( \vee_{j = 1}^l S^{k+1} \right))\cong\pi_{k+1}(\wh Y)\]
represented by the attaching map of this cell. Then the summand $K$ in $H_{k+1}(Y)\oplus K$ is generated by $\rho(\delta_i')$ for $i\geq n{+}1$.
\end{proof}
\begin{proof}[Proof of Proposition~\ref{pro:$CW$-decomp}]
    Recall the $W$-equivariant map 
$f \colon P^{\times n} \to \BM T^{n-1}$ of Lemma~\ref{lem:Bq-input} (2),
where $W = S_n$ acts on $P^{\times n}$ by permuting coordinates
and $P$ is a $CW$-complex with
one cell in every even dimension
and no odd-dimensional cells.
We equip $P^{\times n}$ with the product $CW$-structure,
so that $P^{\times n}$ is a weak $W$-$CW$-complex with a single $0$-cell, which is $W$-fixed.
As we said above, we will use the $W$-equivariant map $f$ as a guide for adding $W$-free cells to 
$P^{\times n}$ to create a weak $W$-$CW$-complex model for $BT_{PU_n}$, 
and we denote this model by $X$.

We claim that $f_*\colon H_*(P^{\times n})\to H_*(\BM T^{n-1})$ is surjective. Indeed, since the map $f\colon P^{\times n}\to \BM T^{n-1}$ is a homomorphism of H-spaces, the induced map \[f_*\colon H_*(P^{\times n})\to H_*(B_MT^{n-1})\]
is a homomorphism of divided power algebras. Furthermore, we note that $f_*$ is surjective in dimension $2$, and that $H_*(B_MT^{n-1})$ is generated as a divided power algebra by elements in $H^2(B_MT^{n-1})$. It follows that 
$f_*\colon H_*(P^{\times n})\to H_*(B_MT^{n-1})$
is surjective. 
Hence we will aim to kill the
homology groups $\Ker(f_*)$ inductively.

Starting from the weak $W$-$CW$-complex $P^{\times n}$,
we denote the $k$-skeleton of the relative $W$-$CW$-complex model for 
$(BT_{PU_n}, P^{\times n})$ by $(X^{(k)}, P^{\times n})$.  Hence
\[ X^{(k)} \coloneqq  P^{\times n} \bigcup( \cup_i e_i), \]
where $e^i$ is a cell of dimension $k$ or less,
and we denote the restriction of the map $B_MT^n\to B_MT^{n-1}$ to $X^{(k)}$ by $f^{(k)}$. We will build $X^{(k+1)}$ by inductively adding $W$-free $k$-cells to $X^{(k)}$ so that the map
$f \colon P^{\times n} \to \BM T^{n-1}$
extends to a $(k{+}1)$-connected map
\[ f^{(k+1)} \colon X^{(k+1)} \to \BM T^{n-1}.\]
We will require the following inductive hypotheses on $X^{(k)}$ and $f^{(k)}$:

\begin{enumerate}[label=\textbf{X\arabic*}]
\item\label{ite:x1} The map $f^{(k)}$ is $W$-equivariant.

\item\label{ite:x2} There is a (non-equivariant) homotopy equivalence $X^{(k)} \simeq B^{(k)} \vee (\vee_{i=1}^{l_k} S^k)$,
where $l_k$ is a non-negative integer, 
$B^{(k)} = P^{\times n} \cup (\cup_j e_j)$ with the $e_j$ cells of dimension $\leq k$ and
\[  
\begin{cases}
(f^{(k)}|_{B^{(k)}})_* \colon H_i(B^{(k)}) \to H_i(\BM T^{n-1}) & \text{is an isomorphism for
$i < k$}, \\
H_i(P^{\times n}) \to H_i(B^{(k)}) & 
\text{is an isomorphism for $i \geq k$.}
\end{cases} \]
In particular, $f^{(k)}$ is $k$-connected. 
\item\label{ite:x3} The splitting $X^{(k)} \simeq B^{(k)} \vee (\vee_{i=1}^{l_k} S^k)$ 
of \ref{ite:x2} induces a splitting of $\Z W$-modules 
\[ H_k(X^{(k)}) \cong H_k(P^{\times n}) \oplus \wt A_k,\]
where $\wt A_k \cong \Z^{l_k}$ is a finitely generated $\Z W$-module 
contained in the image of the Hurewicz homomorphism
$\rho \colon \pi_k(X^{(k)}) \to H_k(X^{(k)})$.
\end{enumerate}
We note that these inductive hypotheses imply that $H_*(X^{(k)})$ is finitely generated torsion--free and also that 
\begin{equation}\label{eq:f_A_k}
    f^{(k)}_*({\wt A_k}) = 0,~\text{for all $k > 2$},
\end{equation}
since $\pi_i(BT^{n-1}) = 0$ for $i > 2$.

To start, we note that $f$ is a $2$-equivalence, 
and so we take $X^{(1)} = X^{(2)} = P^{\times n}$ and $f^{(1)} = f^{(2)} = f$,
which trivially satisfy \ref{ite:x1} above. Taking $B^{(1)} = B^{(2)} = P^{\times n}$ and $\wt A_1 = \wt A_2 =0$, the hypotheses \ref{ite:x2} and \ref{ite:x3} are satisfied.

For $k \geq 2$, suppose that we have constructed $f^{(k)} $ which satisfies the inductive hypotheses \ref{ite:x1}, \ref{ite:x2} and \ref{ite:x3}.
We regard $f^{(k)}$ as an inclusion and consider the pair 
$(\BM T^{n-1}, X^{(k)})$.
%
%
The homotopy and homology long exact sequences of
this pair give the following commutative diagram:

\begin{equation}\label{eq:Hurewicz}
    \begin{tikzcd}
        0 \arrow[d] \arrow[r] &\pi_{k+1}(\BM T^{n-1}, X^{(k)}) \arrow[d, "\rho", "\cong"']\arrow[r,"\del"] &
\pi_k(X^{(k)}) \arrow[d, "\rho"]\arrow[r] & \pi_k(B_MT^{n-1}) \arrow[d] \\
H_{k+1}(\BM T^{n-1}) \arrow[r, "0"]& H_{k+1}(\BM T^{n-1}, X^{(k)})  \arrow[r, "\del", hook]&
H_k(X^{(k)}) \arrow[r,"f_*^{(k)}", twoheadrightarrow] & H_k(\BM T^{n-1}).
    \end{tikzcd}
\end{equation}
Here, the vertical maps are Hurewicz homomorphisms. Since $f^{(k)}$ is a $k$-equivalence, 
$\rho \colon \pi_{k+1}(\BM T^{n-1}, X^{(k)}) 
\to H_{k+1}(\BM T^{n-1}, X^{(k)})$
is an isomorphism by the Relative Hurewicz 
Theorem~\cite[Chapter IV, Theorem 7.2 ]{whitehead1978}.
Since $P^{\times n} \subset X^{(k)}$, 
the induced homomorphism $f_*^{(k)}\colon H_*(X^{(k)}) \to H_*(\BM T^{n-1})$ is onto,
and this implies that $H_{k+1}(\BM T^{n-1}) \to H_{k+1}(\BM T^{n-1}, X^{(k)})$ is the zero map.

Recall the following isomorphism in \ref{ite:x3}:
\begin{equation}\label{eq:x3}
   H_k(X^{(k)}) \cong H_k(P^{\times n}) \oplus \wt A_k
\end{equation}
such that $f^{(k)}_*({\wt A_k}) = 0$, by \eqref{eq:f_A_k}. Therefore, by \eqref{eq:Hurewicz}, we have $\wt A_k\subset\opn{Im}\del\cdot\rho$ and that  $\rho\cdot\del$ is injective.  A diagram chase then yields
\begin{equation}\label{eq:Im_rho_del} 
    \begin{split}
        &\pi_{k+1}(B_MT^{n-1}, X^{(k)})\xrightarrow[\cong]{\rho\del} \opn{Im}\{\rho\del\colon \pi_{k+1}(B_MT^{n-1}, X^{(k)})\to H_k(X^{(k)})\}\\
        =& \opn{Ker}\{f_*\colon H_k(P^{\times n})\to H_k(B_M T^{n-1})\}\oplus \wt A_k.
    \end{split}
\end{equation}
Therefore, there are free $\Z V$-modules $A_{k+1}'$, $A_{k+1}''$, and a $\Z V$-module homomorphism 
\begin{equation}\label{eq:A_k_def}
    p\colon A_{k+1} = A_{k+1}'\oplus A_{k+1}''\to \pi_{k+1}(B_M T^{n-1, X^{(k)}})
\end{equation}
such that
\begin{equation}\label{eq:Ak}
    \rho\cdot\del\cdot p(A_{k+1}') = \opn{Ker}\{H_k(P^{\times n})\xrightarrow{f_*}H_k(B_M T^{n-1})\},\  \rho\cdot\del\cdot p(A_{k+1}'') = \wt A_k.
\end{equation}

Let $\ul \alpha'' = (\alpha_1'',\cdots,\alpha_{s_1}'')$ be a $\Z V$-basis for $A_{k+1}'$, and let $\ul\alpha' = (\alpha_1',\cdots,\alpha_{t_1}')$ be the sequence formed by members of the form $w\alpha_i''$ for $w\in W$ and $1\leq i\leq s_1$.

Let $\ul\beta'' = (\beta_1'',\cdots,\beta_{s_2}'')$ be a $\Z V$-basis for $A_{k+1}''$, and similarly define $\ul\beta' = (\beta_1',\cdots,\beta_{t_2}')$.

Let $\alpha_i = p(\alpha_i')$, $\beta_i = p(\beta_i')$. We proceed making use of Lemma~\ref{l:W-cells}. Let
\begin{equation}\label{eq:gamma}
    \ul\gamma = (\gamma_1,\cdots,\gamma_t) = (\alpha_1,\cdots,\alpha_{t_1},\beta_1,\cdots,\beta_{t_2}),    
\end{equation}
and let 
\begin{equation}\label{eq:MY}
    Y = \XX{k},\ Z = \BM T^{n-1},\ M = \pi_k(Z,Y).
\end{equation}
We define 
\[\XX{k+1} = \wh Y,\ \BB{k+1} = \ol Y.\]
\noindent
The Diagram \eqref{eq:Hurewicz} shows that conditions \ref{ite:M1} and \ref{ite:M3} are satisfied.

By construction, $f^{(k+1)}$ is $W$--equivariant, so \ref{ite:x1} holds for $k{+}1$. 
For \ref{ite:x2}, we have 
\begin{equation*}
    f^{(k+1)}_*\colon H_i(B^{(k+1)})\cong H_i(X^{(k)})\cong H_i(B^{(k)})\xrightarrow[\cong]{f^{(k)}_*}H_i(B_MT^{n-1}),\ i \neq k, k+1.
\end{equation*}
We have the first isomorphism since $B^{(k+1)}$ is obtained by attaching $(k{+}1)$-cells to $X^{(k)}$. The second isomorphism follows since $X^{(k)}$ is the wedge sum of $B^{(k)}$ and copies of $S^k$. The third isomorphism follows from \ref{ite:x2} as part of the induction hypothesis.

It remains to consider the cases $i = k, k{+}1$. For $i=k$, by Lemma~\ref{l:W-cells}, (1), we have a homotopy equivalence $\XX{k+1}\simeq\BB{k+1}\vee (\vee_{i=1}^{l_{k+1}} S^{k+1})$. Furthermore, by \eqref{eq:x3}, and \eqref{eq:Ak}, we have 
\begin{equation}\label{eq:x2_k+1_a}
    \begin{split}
        H_k(\BB{k+1})&\cong H_k(\XX{k})/\del\cdot\rho(M)\\
        &\cong H_k(P^{\times n})/\opn{Ker}(f_*)\oplus \wt A_k/\wt A_k\\
        &\cong H_k(\BM T^{n-1}).
    \end{split}
\end{equation}
For $i= k{+}1$, we have
\begin{equation}\label{eq:x2_k+1_b}
    H_{k+1}(P^{\times n})\xrightarrow{\cong}H_{k+1}(\BB{k})\xrightarrow{\cong}H_{k+1}(\XX{k})\xrightarrow{\cong}H_{k+1}(\BB{k+1}),
\end{equation}
where the first and second isomorphisms follow from \ref{ite:x2} for $k$, and the third one from Lemma~\ref{l:W-cells}, (2). By \eqref{eq:x2_k+1_a} and \eqref{eq:x2_k+1_b}, \ref{ite:x2} holds for $k{+}1$. 

We consider \ref{ite:x3}. By Lemma~\ref{l:W-cells}, (3) and (4), we have an isomorphism of $\Z W$--modules
\begin{equation}\label{eq:x3_k+1_a}
    H_{k+1}(\XX{k+1})\cong H_{k+1}(\XX{k})\oplus\wt A_{k+1}
\end{equation}
such that $\wt A_{k+1}$ is a finitely generated $\Z W$-module in the image of the Hurewicz homomorphism $\rho$. In addition, by \eqref{eq:x2_k+1_b}, we have the isomorphism $H_{k+1}(\XX{k})\cong H_{k+1}(\BB{k+1})$ induced by the inclusion. Therefore, \ref{ite:x3} holds for $k+1$.

We define $(X, P^{\times n})$ to be the infinite relative $CW$-complex obtained
by attaching free $W$-cells in all dimensions:
\[ X \coloneqq  \bigcup_{k \to \infty} X^{(k)}.\]
Finally, we show that the weak $W$-$CW$-complex $X$ is a $\varphi$-adapted model for $BT_{PU_n}$.
By the construction of $X$, the $W$-equivariant map 
$f \colon P^{\times n} \to \BM T^{n-1}$ factors as
\[ P^{\times n} \xrightarrow{~\iota~} X \xrightarrow{~f^\infty~} \BM T^{n-1},\]
where the first arrow is the canonical inclusion 
$\iota \colon P^{\times n} \hookrightarrow X$ and $f^\infty$ is the union of the maps $W$-equivariant maps $f^{(k)} \colon X^{(k)} \to \BM T^{n-1}$ and is thus $W$-equivariant.
Since $f^\infty$ is the increasing union of $k$-connected maps between simply-connected spaces,
$f^\infty$ is a weak homotopy equivalence.
%
By Lemma~\ref{lem:Bq-input} (1), the $W$-action on $\BM T^{n-1}$ is $\psi$-adapted, and so applying Lemma~\ref{lem:easy_obs} to $f^\infty \colon X \to \BM T^{n-1}$, 
we conclude that the $W$-action on $X$ is also $\psi$-adapted.

Next, we consider the projection $Z^k(X)^W\to H^k(X)^W$. For the chain complex $C_*(X)$, let $d_k\colon C_k(X)\to C_{k-1}(X)$ denote the differential. We have
\begin{equation*}
    C_k(X) = C_k(\XX{k})\cong C_k(P^{\times n})\oplus A_k,
\end{equation*}
where $A_k$ is defined as in \eqref{eq:A_k_def}. For any $w\in W$, $d_k(w\alpha_i')$ (or $d_k(w\beta_j')$) is a cocycle representing $\del\cdot\rho(\alpha_i)$ (or $\del\cdot\rho(\beta_j)$). 
Therefore we have $A_k = A_k'\oplus A_k''$ and 
\begin{equation}\label{eq:directsum}
    d_k\colon C_k(P^{\times n})\oplus A_k'\oplus A_k''\to C_{k-1}(P^{\times n})\oplus A_{k-1}.
\end{equation}
With respect to the direct sum decompositions in  \eqref{eq:directsum},
$d_k$ is represented by a matrix
\begin{equation}\label{eq:im_u'_im_v'}
    d_k = 
    \begin{pmatrix}
        0 & u_k' & 0\\
        0 &  0  &v_k'
    \end{pmatrix},
\end{equation}
where 
$u_k'\colon A_k' \to C_{k-1}(P^{\times n})$
and
$v_k'\colon A_k''  \to A_{k-1}$
and the $0$'s are $0$ matrices over $\Z$. By \eqref{eq:Ak} we have $d_k(\alpha_i') = u_k'(\alpha_i')$ and $d_k(\beta_i') = v_k'(\beta_i')$. 

Consider the compositions
\[u_k\colon C_k(X)\to A_k'\xrightarrow{u_k'}C_{k-1}(P^{\times n})\]
and 
\[v_k\colon C_k(X)\to A_k''\xrightarrow{v_k'}A_{k-1}\]
where the unlabeled arrows denote projections onto direct summands. By \eqref{eq:im_u'_im_v'}, we have
\begin{equation}\label{eq:im_u_im_v}
    \begin{cases}
        d_k = u_k + v_k,\\
        \opn{Ker}d_k = \opn{Ker}u_k \oplus \opn{Ker}v_k,\\
        \opn{Im} d_k = \opn{Im} u_k \oplus \opn{Im} v_k.
    \end{cases}     
\end{equation}
\noindent
Since the homomorphism
\[\iota_{\#}\colon C_k(P^{\times n})\hookrightarrow C_k(P^{\times n})\oplus A_k = C_k(X)\]
induces a surjective homomorphism between homology groups, by \eqref{eq:im_u_im_v}, we have an exact sequence 
\begin{equation}\label{eq:ses_v}
    C_{k+1}(X)\xrightarrow{v_{k+1}} A_k\xrightarrow{v_k}A_{k-1}
\end{equation}
and
\begin{equation}\label{eq:d_kC}
   d_k(C_k(P^{\times n})) = 0.
\end{equation}

The situation is illustrated as follows:
\begin{equation}\label{eq:C(X)}
    \begin{tikzcd}
        C_{2j+2}(BT^n) & \oplus & \{0\} & \oplus & A_{2j+2}'' \arrow[dll] \arrow[d] \\
\{0\} & \oplus & A_{2j+1}' \arrow[dll,"u_{2j+1}"]  & \oplus & A_{2j+1}'' \arrow[d, "v_{2j+1}"] \\
C_{2j}(BT^n) & \oplus & \{0\} & \oplus & A''_{2j}
\arrow[dll,"v_{2j}"] \arrow[d, "v_{2j}"]  \\
\{0\} & \oplus & A_{2j-1}' & \oplus & A''_{2j-1}. 
    \end{tikzcd}
\end{equation}

For an abelian group $A$, let $\check{A} = \opn{Hom}(A,\Z)$. For a homomorphism of abelian groups $\phi\colon A\to B$, let $\check{\phi}\colon\check{B}\to\check{A}$ denote its dual. In particular, we have 
\begin{equation}\label{eq:cochain_direct_sum}
    C^k(X) \cong C^k(P^{\times n})\oplus \check{A}_k.
\end{equation}
Taking the transpose of the matrix \eqref{eq:im_u'_im_v'}, we have the dual of \eqref{eq:im_u_im_v} 
\begin{equation}\label{eq:im_u_im_v_dual}
    \begin{cases}
        \check{d}_k = \check{u}_k + \check{v}_k,\\
        \opn{Ker}\check{d}_k = \opn{Ker}\check{u}_k \oplus \opn{Ker}\check{v}_k,\\
        \opn{Im} \check{d}_k = \opn{Im} \check{u}_k \oplus\opn{Im} \check{v}_k.
    \end{cases}     
\end{equation}
By \eqref{eq:d_kC} and \eqref{eq:im_u_im_v_dual}, we have
\begin{equation}\label{eq:d_kC_dual}
    \opn{Im}\check{d}_k \subseteq \check{A}_k.
\end{equation}
\noindent
Since $H_k(X)\cong H_k(\BM T^{n-1})$ is concentrated in even degrees, the projection $Z^k(X)^W\to H^k(X)^W$ is trivially surjective when $k$ is odd. 

The remaining case is $k =2m$. By \eqref{eq:ses_v}, we have a short exact sequence
\begin{equation}\label{eq:SE_A}
   C_{2m+1}(X)\xra{v_{2m+1}}A_{2m}\xra{d_{2m}}B_{2m-1}(X)\to 0.
\end{equation}
\noindent
Then \eqref{eq:SE_A} yields a short exact sequence
\begin{equation}\label{eq:SE_Adual}                 
  C^{2m+1}(X)\xleftarrow{\check{v}_{2m+1}}\check{A}_{2m}\xleftarrow{\check{d}_{2m}}\check{B}_{2m-1}(X)\leftarrow 0.
\end{equation}
Since $H_{2m-1}(X) = 0$, we have $B_{2m-1}(X) = Z_{2m-1}(X)$. Therefore, we have
\begin{equation}\label{eq:B=Z}
    \check{B}_{2m-1}(X) = \check{Z}_{2m-1}(X).
\end{equation}
Since $C_{2m-1}(X) = A_{2m-1}$, we have another short exact sequence
\begin{equation}\label{eq:SES_ZAB}
    0\to Z_{2m-1}(X)\hookrightarrow A_{2m-1}\xra{d_{2m-1}} B_{2m-2}(X)\to 0,
\end{equation}
where $B_{2m-2}$ is torsion--free. Therefore, the short exact sequence \eqref{eq:SES_ZAB} splits and we have yet another short exact sequence
\begin{equation}\label{eq:SES_ZA}
    0\leftarrow \check{Z}_{2m-1}(X)\leftarrow \check{A}_{2m-1}.
\end{equation}
By \eqref{eq:B=Z} and \eqref{eq:SES_ZA}, we have
\begin{equation}\label{eq:Images}
    \opn{Im}\{\check{B}_{2m-1}(X)\xra{\check{d}_{2m}}\check{A}_{2m}\} = 
    \opn{Im}\{\check{C}_{2m-1}(X)=\check{A}_{2m-1}\xra{\check{v}_{2m}}\check{A}_{2m}\}.
\end{equation}
By \eqref{eq:SE_Adual} and \eqref{eq:Images}, we have one more short exact sequence
\begin{equation}\label{eq:SESdual}
    C^{2m+1}(X)\xleftarrow{\check{v}_{2m+1}}\check{A}_{2m}\xleftarrow{\check{v}_{2m}}\check{A}_{2m-1}.
\end{equation}
By \eqref{eq:cochain_direct_sum}, we have
\begin{equation}\label{eq:cochain_direct_sum_2m}
   C^{2m}(X)\cong C^{2m}(P^{\times n})\oplus\check{A}_{2m},\ C^{2m-1}(X) = \check{A}_{2m-1}.
\end{equation}
Assembling \eqref{eq:SESdual} and \eqref{eq:cochain_direct_sum_2m}, and applying \eqref{eq:im_u_im_v_dual}, we deduce that the inclusion $C^{2m}(P^{\times n})\hookrightarrow C^{2m}(X)$ in the sense of \eqref{eq:cochain_direct_sum} induces an isomorphism
\begin{equation}\label{eq:H2m}
    H^{2m}(X)\cong \opn{Ker}\{\check{u}_{2m+1}\colon C^{2m}(P^{\times n})\to C^{2m+1}(X)\}.
\end{equation}

By \eqref{eq:im_u_im_v_dual} and \eqref{eq:H2m}, we have
\begin{equation}\label{eq:Z2m}
    Z^{2m}(X) \cong \opn{Ker}\check{u}_{2m+1} \oplus \opn{Ker}\check{v}_{2m+1}\cong H^{2m}(X) \oplus \opn{Ker}\check{v}_{2m+1}.
\end{equation}
It follows that the projection 
\begin{equation*}
    Z^{2m}(X)^W \cong H^{2m}(X)^W \oplus (\opn{Ker}\check{v}_{2m+1})^W\to H^{2m}(X)^W
\end{equation*}
is the projection onto a direct summand, which is onto.
\end{proof}

\numberwithin{equation}{subsection}
\numberwithin{theorem}{subsection}

\section{On the Weyl group invariants $H^*(BT_{PU_n})^W$} \label{s:primitive}
In the previous section we proved that $\rho_{PU_n} \colon H^*(BPU_n) \to H^*(BT_{PU_n})^W$ is onto.
In this section we present several descriptions of $H^*(BT_{PU_n})^W$ and $\rho_{PU_n}$,
proving Corollary \ref{cor:Bq} and Theorem \ref{thm:6groups}.
First we summarise what was known rationally.

Let $SU_n \subset U_n$ denote the special unitary group.  
There is a normal $(\Z/n)$-covering space
$\Z/n \to SU_n \xra{q} PU_n $
and a corresponding fibration sequence
\[ B(\Z/n) \to BSU_n \xra{Bq} BPU_n. \]
Since the fibre $B(\Z/n) = K(\Z/n, 1)$ has trivial rational cohomology, the Serre spectral sequence of the above fibration shows that
\[ (Bq)^* \colon H^*(BPU_n; \Q) \to H^*(BSU_n; \Q) = \Q[c_2, \dots, c_n] \]
is an isomorphism. 
Since $H^*(BPU_n; \Q) \to H^*(BT_{PU_n}; \Q)^W$ is an isomorphism,
by, for example, \cite[Chapter III, Lemma 1.1, Reduction 2]{hsiang2012cohomology}, we have
\begin{proposition} \label{prop:H^*(BPU_n;Q)}
$H^*(BT_{PU_n}; \Q)^W \cong H^*(BPU_n; \Q) \cong \Q[c_2, \dots, c_n]$.  \qed
\end{proposition}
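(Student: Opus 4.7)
The plan is essentially to combine two standard facts, both of which have already been set up in the paragraph preceding the statement. First I would invoke the general result (as cited via \cite[Chapter III]{hsiang2012cohomology}) that for any compact connected Lie group $G$, the restriction homomorphism $\rho_{G;\Q}\colon H^*(BG;\Q)\to H^*(BT_G;\Q)^W$ is an isomorphism. Applied to $G=PU_n$ this gives the first isomorphism $H^*(BPU_n;\Q)\cong H^*(BT_{PU_n};\Q)^W$.

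Next I would identify $H^*(BPU_n;\Q)$ with $\Q[c_2,\dots,c_n]$ by running the Serre spectral sequence of the fibration
\[ B\Z/n \to BSU_n \xra{Bp_n} BPU_n \]
arising from the covering $\Z/n\to SU_n\to PU_n$. Since $B\Z/n=K(\Z/n,1)$ has trivial reduced rational cohomology, the spectral sequence collapses rationally to give
\[ (Bp_n)^*\colon H^*(BPU_n;\Q)\xra{\cong} H^*(BSU_n;\Q). \]
Combined with the well-known computation $H^*(BSU_n;\Q)=\Q[c_2,\dots,c_n]$ (obtained, for instance, from the fibration $S^{2n-1}\to BSU_n\to BU_n$, or directly from the rational version of the splitting principle applied to $SU_n$), this yields the second isomorphism in the proposition.

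There is essentially no obstacle here, as both ingredients are classical; the role of the statement is to record the rational picture for comparison with the integral results of the paper. The only minor point worth being explicit about is the verification that the coefficient system on $B\Z/n$ in the Serre spectral sequence is trivial (which holds because $SU_n$ is simply connected, so $\pi_1(BPU_n)=\Z/n$ acts trivially on $H^*(B\Z/n)$ up to conjugation, and in any case only the rational cohomology of the fibre matters). Beyond this the argument amounts to stringing the two isomorphisms together.
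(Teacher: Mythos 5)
Your proposal is correct and follows the same two-step route as the paper: the rational surjectivity/isomorphism of $\rho_{G;\Q}$ quoted from \cite[Chapter III]{hsiang2012cohomology}, combined with the collapse (over $\Q$) of the Serre spectral sequence for $B\Z/n \to BSU_n \to BPU_n$ to identify $H^*(BPU_n;\Q)$ with $H^*(BSU_n;\Q)=\Q[c_2,\dots,c_n]$. Your brief remark on the triviality of the coefficient system is a reasonable extra precaution, and as you note it is moot since the reduced rational cohomology of the fibre vanishes.
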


\subsection{The isomorphism $FH^*(BPU_n) \to H^*(BT_{PU_n})^W$}
For any space $X$, we let $TH^*(X) \subseteq H^*(X)$ denote the ideal of torsion classes and define the free quotient of $H^*(X)$ by $FH^*(X) \coloneqq  H^*(X)/TH^*(X)$.
Since $H^*(BT_{PU_n})^W$ is torsion--free,
Theorem \ref{thm:mainPU} implies that $\rho_{PU_n}$ induces
an isomorphism
\begin{equation}\label{eq:mod-torsion}
\overline \rho_{PU_n} \colon FH^*(BPU_n) \to H^*(BT_{PU_n})^W.
\end{equation}
It is interesting to ask whether there is a section of $\overline \rho_{PU_n}$ which is a ring homomorphism. When $n = 3$, Vistoli \cite{vistoli2007cohomology}, constructs such a section, completing the work of Vezzosi \cite{vezzosi2000chow}.

\subsection{The isomorphism $ H^*(BT_{PU_n})^W \to \opn{Ker}(\nabla_{\! n})$}
Recall the fibration sequence
\[ BS^1 \to BU_n \xra{Bq} BPU_n,\]
which is obtained by applying the classifying space functor to the central extension
\begin{equation} \label{eq:UtoPU}
S^1 \to U_n \xra{q} PU_n.
\end{equation}
We will investigate the image of the homomorphism $(Bq)^* \colon H^*(BPU_n) \to H^*(BU_n)$. 

Without risk of ambiguity, we denote the restriction of $q$ on the maximal torus also by $q$:
\[q \colon T_{U_n}\to T_{PU_n},\]
and this map may be identified with the quotient map
$T^n \to T^{n-1} = T^n/S^1$, where $S^1 \subset T^n$ is the diagonal circle.  The central extension \eqref{eq:UtoPU} induces an isomorphism of Weyl groups, $W_{U_n} \to W_{PU_n}$, and there is a canonical identification $W_{U_n} = S_n$.
We consider the following commutative diagram:
%
\begin{equation} \label{eq:Bq}
    \begin{tikzcd}
        H^*(BU_n)\arrow[r, "\rho_{U_n}", "\cong"'] & H^*(BT_{U_n})^W\arrow[r, "="] & \Z[v_1, \dots, v_n]^{S_n}\\
        H^*(BPU_n)\arrow[u, "Bq^*"]\arrow[r, "\rho_{PU_n}"]& H^*(BT_{PU_n})^W\arrow[u, "Bq^*"]\arrow[r,,"="]&
        \Z[w_1, \dots, w_{n-1}]^{S_n}\arrow[u,"Bq^*"]. 
    \end{tikzcd}
\end{equation}
Here, as later justified by Lemma~\ref{lem:WI-Alg}, the generators $w_i$ and $v_i$ may be chosen to that they satisfy $(Bq)^*(w_i) = v_{i+1}-v_i$. It is well known that $\rho_{U_n}$ is an isomorphism, which we
shall regard as an identification. In order to characterize the image of $(Bq)^*$, we define
the derivation
\[ \nabla_{\! n} := \sum_{i=1}^n \frac{\partial}{\partial v_i} \colon \Z[v_1, \dots, v_n] \to \Z[v_1, \dots, v_n].\]
Since $\nabla_{\! n}$ commutes with the action of $S_n$ on $\Z[v_1, \dots, v_n]$, it restricts to the symmetric polynomials and we use the same symbol to denote the restriction
\[ \nabla_{\! n} \colon \Z[v_1, \dots, v_n]^{S_n} \to \Z[v_1, \dots, v_n]^{S_n}.\]
%
Since
$\nabla_n((Bq)^*(w_i)) = \nabla_n(v_{i+1}-v_i) = 0$ for $i = 1, \dots, n{-}1$
and $\nabla_n$ is a derivation,
$\Ker(\nabla_n) \subseteq (Bq)^*(\Z[w_1, \dots, w_{n-1}])^{S_n}$.  
The proof of the opposite inclusion is essentially given in the proof of \cite[Proposition 3.3 (2)]{gu2019cohomology}, where variables $u_i = v_i - v_n$ appear, and we adapt that proof to the variables $w_i$.

\begin{lemma} \label{lem:WI-Alg}
We have
$(Bq)^*(\Z[w_1, \dots, w_{n-1}]^{S_n}) = \opn{Ker}(\nabla_{\! n})$. In particular, $\opn{Ker}(\nabla_{\! n})$ is an $S_n$-invariant subring of $\Z[v_1,\cdots,v_n]$.
%
\end{lemma}

\begin{proof}
Let $w_i' = v_{i+1}-v_i$. It suffices to show for any polynomial $\theta(v_1,\dots,v_n)$ (not necessarily $S_n$-invariant), that if $\nabla(\theta(v_1,\dots,v_n)) = 0$, then \[\theta(v_1,\dots,v_n) = \wt{\theta}(w_1',\dots,w'_{n-1},v_n)\]
for some polynomial $\wt{\theta}$.
%
Changing variables, let
$\theta(v_1, \dots, v_n) =
\wt{\theta}(w_1',\dots,w_{n-1}', v_n)$.
Applying the Chain Rule, we have
\begin{multline*}
\nabla\bigl( \theta(v_1, \cdots, v_n) \bigr) 
    = \frac{\partial \tilde \theta}{\partial v_1} +
    \frac{\partial \tilde \theta}{\partial v_2}
    + \dots + 
    \frac{\partial \tilde \theta}{\partial v_{n-1}}+
    \frac{\partial \tilde \theta}{\partial v_n} =
    \\
    \frac{-\del \tilde \theta}{\del w_1'} + 
    \left(\frac{\del \tilde \theta}{\del w_1'} - \frac{\del \tilde \theta}{\del w_2'}\right) + 
    \dots + 
    \left(\frac{\del \tilde \theta}{\del w_{n-2}'} - \frac{\del \tilde \theta}{\del w_{n-1}'}\right) +
    \left(\frac{\del \tilde \theta}{\del w_{n-1}'} + \frac{\del \tilde \theta}{\del v_n}\right)
     = 
     \frac{\partial \tilde \theta}{\del v_n}.
\end{multline*}
Now, $\tilde \theta(w_1', \dots, w_{n-1}', v_n) \in \Z[w_1, \dots, w_n]$ if and only if $\displaystyle \frac{\partial \tilde \theta}{\del v_n} = 0$, 
and therefore $\nabla(\theta(v_1, \dots, v_n)) = 0$ if and only if $\theta(v_1, \dots, v_n) \in \Z[w_1', \dots, w_{n-1}']$, as required.
\end{proof}

\noindent
Since $(Bq)^*$ is injective, Lemma \ref{lem:WI-Alg} gives

\begin{lemma} \label{lem:WI-Alg2}
$(Bq)^* \colon H^*(BT_{PU_n})^W \to \opn{Ker}(\nabla_{\! n})$ is an isomorphism. \qed
\end{lemma}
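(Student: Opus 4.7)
The plan is to combine Lemma \ref{lem:WI-Alg}, which identifies the image of $(Bq^T_n)^*$ with $\opn{Ker}(\nabla_{\!n})$, with the injectivity of $(Bq^T_n)^*$ on all of $H^*(BT_{PU_n})$. First I would verify this injectivity by noting that the short exact sequence of tori $1\to S^1 \to T^n \to T^{n-1}\to 1$ splits: passing to character lattices gives a short exact sequence of free abelian groups, which always splits. Hence there is an isomorphism $T^n \cong T^{n-1}\times S^1$ carrying $q^T_n$ to the projection onto the first factor. Applying $B(-)$ and the K\"unneth theorem gives a graded ring isomorphism $H^*(BT_{U_n}) \cong H^*(BT_{PU_n}) \otimes H^*(BS^1)$ under which $(Bq^T_n)^*$ is the inclusion $x \mapsto x \otimes 1$, manifestly a split monomorphism.

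Second, since $(Bq^T_n)^*$ is injective on the whole of $H^*(BT_{PU_n})$, its restriction to the subring $H^*(BT_{PU_n})^W \cong \Z[w_1,\dots,w_{n-1}]^{S_n}$ is also injective. By Lemma \ref{lem:WI-Alg}, the image of this restriction inside $\Z[v_1,\dots,v_n]^{S_n} = H^*(BT_{U_n})^W$ equals $\opn{Ker}(\nabla_{\!n})$. Combining injectivity with this identification of the image yields the asserted isomorphism $(Bq^T_n)^* \colon H^*(BT_{PU_n})^W \xrightarrow{\cong} \opn{Ker}(\nabla_{\!n})$. There is no real obstacle: the substantive content lies in Lemma \ref{lem:WI-Alg}, and once injectivity of $(Bq^T_n)^*$ is in hand the present lemma is immediate.
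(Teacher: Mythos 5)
Your proof is correct and follows the paper's own route: cite Lemma \ref{lem:WI-Alg} for the image and combine with injectivity of $(Bq^T_n)^*$. The paper simply asserts injectivity (which is also immediate from $(Bq^T_n)^*(w_i)=v_{i+1}-v_i$ being algebraically independent linear forms), whereas you supply a tidy justification via the split exact sequence of tori and K\"unneth; this is a welcome but inessential elaboration.
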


Since $H^*(BT_{U_n})^W \cong H^*(BU_n)$,
we also regard $\nabla_{\! n}$ as a derivation,
\begin{equation}\label{eq:nabla}
\nabla_{\! n}\colon H^*(BU_n)\to H^{*-2}(BU_n),
\end{equation}
which was investigated by the second author in \cite{gu2019cohomology}.
If $c_k \in \Z[v_1, \dots, v_n]^{S_n}$ denotes the $k$th elementary symmetric polynomial in $v_1,\cdots,v_n$,
equivalently the $k$th Chern class, then an elementary calculation gives
\begin{equation}\label{eq:nabla_c}
    \nabla_{\! n}(c_k) = (n{-}k{+}1)c_{k-1},
\end{equation}
and along with the Leibniz rule, this identity determines $\nabla_{\! n}$.
Now we observe that $\opn{Ker}(\nabla_{\!n}) \subset H^*(BU_n)$ is a summand, since
it is the kernel of a homomorphism
to the torsion--free group $H^{*-2}(BU_n)$.
Lemma~\ref{lem:WI-Alg2}, Theorem~\ref{thm:mainPU} and diagram~\eqref{eq:Bq}
now give the following algebraic characterisation of $(Bq)^*(H^*(BPU_n)) \subset H^*(BU_n)$.

\begin{proposition} \label{prop:Bq-Alg}
We have the identification of subrings $(Bq)^*(H^*(BPU_n)) = \opn{Ker}(\nabla_{\! n})$. Moreover, this subring is a summand of $H^*(BU_n)$. \qed
\end{proposition}


\noindent
Proposition~\ref{prop:Bq-Alg} and Lemma~\ref{lem:WI-Alg2} together prove Corollary \ref{cor:Bq}.

\begin{remark}
From the discussion leading up to Proposition \ref{prop:H^*(BPU_n;Q)}, it was known that $(Bq)^*(H^*(BPU_n)) \subseteq \opn{Ker}(\nabla_{\! n})$
was a subgroup of finite index in each degree.
With Theorem \ref{thm:mainPU} we now know that equality holds.
\end{remark}

\begin{example}
In degree $2$, $\opn{Ker}(\nabla_{\! n}) \cap H^2(BU_n)$ is trivial and in degree $4$
we have the following generator for 
$\bigl( \opn{Ker}(\nabla_{\! n}) \cap H^4(BU_n) \bigr) = (Bq)^*(H^4(BPU_n))$:
%
%
\[
\begin{cases}
2n c_2 - (n{-}1)c_1^2  & \text{$n$ even}, \\
nc_2 - \lfloor n/2 \rfloor c_1^2 & \text{$n$ odd.}
\end{cases} \]
A routine computation shows 
\begin{equation*}
    \begin{cases}
        Bq^* (2n c_2 - (n{-}1)c_1^2) = \sum_{i<j}\sum_{k=i}^n\sum_{l=j}^n w_k w_l,   \text{ $n$ even}, \\
        Bq^*(nc_2 - \lfloor n/2 \rfloor c_1^2) = \frac{1}{2}\sum_{i<j}\sum_{k=i}^n\sum_{l=j}^n w_k w_l, \text{ $n$ odd.}
    \end{cases}
\end{equation*}
\end{example}

\begin{remark} \label{rem:Ker(nabla)}
We consider the ring structure of $\opn{Ker}(\nabla_{\! n})$.
By Proposition~\ref{prop:H^*(BPU_n;Q)},
setting $c_1 = 0$ induces an isomorphism
$\opn{Ker}(\nabla_{\! n}) \otimes \Q \cong \Q[c_2, \dots c_n]$ but
integrally the situation can be more complex.
In the special case $n = 2$, $\opn{Ker}(\nabla_2) = \Z[4c_2 - c_1^2]$ is
a polynomial ring on one generator of degree $4$.
However, for $n = 3$,
Vistoli has shown that $\opn{Ker}(\nabla_3)$ is not a polynomial ring \cite[Theorem 3.7]{vistoli2007cohomology}.
Indeed, there is a class $\delta\in FH^{12}(BPU_3)$ which is not a polynomial in lower dimensional classes, but $27\delta$ is. In general, it is not clear to the authors whether $\opn{Ker}(\nabla_{\! n})$ is even finitely generated as a ring.

Since we have a ring isomorphism $\opn{Ker}(\nabla_{\! n})\cong H^*(BT_{PU_n})^W$, 
we 
are also considering whether $H^*(BT_{PU_n})^W$ is a polynomial ring over $\Z$. For a general compact Lie group $G$, a subgroup $W_0$ of $W$, and a commutative unital ring $R$, it is interesting to consider whether $H^*(BT;R)^{W_0}$ is a polynomial ring over $R$. Ishiguro, Koba, Miyauchi and Takigawa \cite{ishiguro2020modular} studied many cases. For instance, they showed that for $G = SU(n)$ and $W_0 = A_n$, the alternating group, $H^*(BT;R)^{W_0}$ is
\begin{enumerate}
    \item a polynomial ring for $n = 3$ and $R = \F_3$, and
    \item not a polynomial ring for $n = 3$ and $R = \F_p$, $p \neq 3$, and
    \item not a polynomial ring for $n = 4$ and $R = \F_2$.
\end{enumerate}
\end{remark}

\subsection{The equality $PH^*(BU_n) = \opn{Ker}(\nabla_{\! n})$}
In the previous subsection we characterised
\[ (Bq)^*(H^*(BPU_n)) \subset H^*(BU_n) \]
algebraically.
In this subsection we characterise this subgroup topologically.

Recall the map $\mu\colon BS^1\times BU_n\to BU_n$ classifying the external tensor product of the universal complex line bundle over $S^1$ and the universal complex $n$-vector bundle over $BU_n$.
Then $\mu$ induces a homomorphism
\[\mu^* \colon H^*(BU_n)\to H^*(BS^1)\otimes H^*(BU_n),\]
which is a comodule structure for $H^*(BU_n)$ over $H^*(BS^1)$. Notice that $\mu^*$ is also a homomorphism of graded rings. A primitive element of this comodule is an element $c \in H^*(BU_n)$ satisfying $\mu^*(c)=1\otimes c$. The primitive elements of
$H^*(BU_n)$ form a subring
\[ PH^*(BU_n) \coloneqq  \{ c \in H^*(BU_n) \, \mid \, \mu^*(c) = 1 \otimes c \}. \]
%


By the universal property of the map $\mu$, the primitive subring of $H^*(BU_n)$ can be described as the subring of those
characteristic classes of rank $n$ complex vector bundles $E$ which remain unchanged
after tensor product with any line bundle $L$:
\[ PH^*(BU_n) \coloneqq  \{ c \in H^*(BU_n) \, \mid \, c(E) = c(L \otimes E)\} \]

\begin{proposition}\label{prop:PH=Ker(nabla)}
$PH^*(BU_n) = \opn{Ker}(\nabla_{\! n})$. \qed
\end{proposition}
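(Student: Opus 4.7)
The plan is to compute $\mu^{*}$ explicitly on symmetric polynomials by reducing to the maximal torus, and then characterise the primitivity condition as a first-order linear PDE that matches the definition of $\nabla_{\!n}$.

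First I would restrict $\mu$ to maximal tori. The central circle $S^1 \subset U_n$ together with $T_{U_n} = T^n$ give the subgroup homomorphism $S^1 \times T^n \to T^n$, $(z,(w_1,\dots,w_n)) \mapsto (zw_1,\dots,zw_n)$. This classifies the same tensor product construction as $\mu$ on the full unitary group, so by naturality of restriction to maximal tori we get a commutative diagram in which the map $\tilde{\mu}^{*} \colon H^{*}(BT^n) \to H^{*}(BS^1) \otimes H^{*}(BT^n) = \Z[x,v_1,\dots,v_n]$ is determined by $\tilde{\mu}^{*}(v_i) = x + v_i$, where $x = c_1 \in H^2(BS^1)$. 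Identifying $H^{*}(BU_n) = \Z[v_1,\dots,v_n]^{S_n}$ via the restriction $\rho_{U_n}$ from diagram~\eqref{eq:Bq^T_n}, this yields the explicit formula
\[
\mu^{*}(P)(x,v_1,\dots,v_n) \;=\; P(x+v_1,\dots,x+v_n) \qquad \text{for every } P \in \Z[v_1,\dots,v_n]^{S_n}.
\]

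Next I would translate the primitivity condition. A class $P$ lies in $PH^{*}(BU_n)$ exactly when $\mu^{*}(P) = 1 \otimes P$, which in the above notation is the polynomial identity $P(x+v_1,\dots,x+v_n) = P(v_1,\dots,v_n)$ in $\Z[x,v_1,\dots,v_n]$. Differentiating with respect to $x$ and setting $x=0$ gives the operator $D := \sum_{i=1}^{n} \partial/\partial v_i$ applied to $P$, hence $DP = 0$ is necessary. Conversely, if $DP = 0$ as a polynomial in $v_i$, then $(DP)(x+v_1,\dots,x+v_n) = 0$ as well, so the $x$-derivative of $P(x+v_1,\dots,x+v_n)$ vanishes identically; integrating recovers the identity $P(x+v_1,\dots,x+v_n) = P(v_1,\dots,v_n)$. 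Thus $PH^{*}(BU_n) = \ker\bigl(D|_{\Z[v_1,\dots,v_n]^{S_n}}\bigr)$.

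Finally I would identify $D$ restricted to symmetric polynomials with $\nabla_{\!n}$. The operator $D$ is a derivation on $\Z[v_1,\dots,v_n]$ that commutes with the $S_n$-action, hence restricts to a derivation on $\Z[v_1,\dots,v_n]^{S_n}$. A direct count shows $D(e_k) = (n-k+1)e_{k-1}$: each $(k-1)$-element subset of $\{1,\dots,n\}$ arises from $n-k+1$ different choices of $i$. Since derivations on $\Z[c_1,\dots,c_n]$ are determined by their values on the generators, this matches the defining formula $\nabla_{\!n}(c_k) = (n-k+1)c_{k-1}$ from the definition, and so $D|_{\Z[v_1,\dots,v_n]^{S_n}} = \nabla_{\!n}$, completing the proof.

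The only place where a little care is needed is the converse direction in the second step, since writing $P(x+v_1,\dots,x+v_n)$ as a Taylor series in $x$ a priori introduces denominators $j!$; working directly with the $x$-derivative of the polynomial (rather than the Taylor expansion) avoids any divisibility issue and keeps everything integral.
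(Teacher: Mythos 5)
Your proof is correct, and it takes a genuinely different route from the paper. The paper quotes Toda's Proposition 3.7 for the comodule formula $\mu^*(c_k)=\sum_{i\geq 0} v^i\otimes \tfrac{\nabla_{\!n}^i}{i!}(c_k)$ and then extends this to arbitrary monomials in the Chern classes by an induction using the Leibnitz rule for the derivation $\nabla_{\!n}$; primitivity is then read off from the coefficient of $v^1$. You instead bypass Toda entirely by restricting $\mu$ to maximal tori, where the map $S^1\times T^n\to T^n$ makes $\tilde\mu^*(v_i)=x+v_i$ transparent, so that $\mu^*(P)=P(x+v_1,\dots,x+v_n)$ for symmetric $P$. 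Primitivity becomes the statement that this polynomial is independent of $x$, which you equate with $DP=0$ by taking the $x$-derivative and using that $\Z[v_1,\dots,v_n][x]$ is torsion-free of characteristic zero (so $\partial_x Q=0$ forces $Q$ constant in $x$) — this is the right way to finesse the integrality concern you flag, and it avoids the fractions $\tfrac{\nabla_{\!n}^i}{i!}$ altogether. Your approach is the more self-contained and elementary one; the paper's approach, at the modest cost of citing Toda, produces the full comodule expansion of $\mu^*$, which is a slightly stronger statement than primitivity alone. One small remark: the paper already \emph{defines} $\nabla_{\!n}$ in Section 4 as $\sum_i\partial/\partial v_i$ on $\Z[v_1,\dots,v_n]$ and then checks the Chern-class formula, so your closing identification of $D$ with $\nabla_{\!n}$ is agreeing with a definition rather than proving something new, but it is harmless to include.
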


\begin{proof}
 We write $H^*(BS^1)=\Z[v]$ with $v$ of degree $2$. The proposition would follow immediately from 
 \begin{equation}\label{eq:mu ck}
    \mu^*(x)=\sum_{i=0}^{k}v^i\otimes \frac{\nabla_n^i}{i!}(x)
\end{equation}
for all $ x\in H^*(BU_n)$. We show this as follows. By \cite[Proposition 3.2]{toda1987cohomology},  $\mu^*$ 
satisfies
\begin{equation}\label{eq:toda}
    \mu^*(c_k)=\sum_{i+j = k}\binom{n-j}{i} v^i\otimes c_k.
\end{equation}
By \eqref{eq:toda} and the formula \eqref{eq:nabla} for $\nabla_n(c_k)$, \eqref{eq:mu ck} holds for $x = c_k$. For any $x\in H^*(BU_n)$, one only needs to check \eqref{eq:mu ck} for monomials in the elements $c_1\dots,c_n$, and it follows by a simple induction on the number of elements involved.  We leave the details to the reader.

%
%
\end{proof}

\noindent
From Proposition~\ref{prop:PH=Ker(nabla)} and Proposition~\ref{prop:Bq-Alg} we obtain

\begin{proposition}\label{prop:Bq*=PH)}
$(Bq)^*(H^*(BPU_n)) = PH^*(BU_n)$. \qed
\end{proposition}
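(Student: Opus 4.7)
The plan is to deduce this proposition immediately by chaining the two preceding results. First I would invoke Proposition~\ref{prop:Bq_n-Alg} to identify $(Bq_n)^*(H^*(BPU_n))$ with $\opn{Ker}(\nabla_{\!n}) \subset H^*(BU_n)$, and then invoke Proposition~\ref{prop:PH=Ker(nabla)} to identify $PH^*(BU_n)$ with the same subgroup. Transitivity of equality then yields $(Bq_n)^*(H^*(BPU_n)) = PH^*(BU_n)$, and no further argument is required.

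The hard work has already been done in the two inputs, so there is essentially no obstacle at this stage. The algebraic identification $(Bq_n)^*(H^*(BPU_n)) = \opn{Ker}(\nabla_{\!n})$ rests on Theorem~\ref{thm:mainPU} together with the description of the Weyl invariants via the injection $(Bq_n^T)^*$ fitting into diagram~\eqref{eq:Bq^T_n}. The topological identification $PH^*(BU_n) = \opn{Ker}(\nabla_{\!n})$ rests on Toda's formula $\mu^*(c_k) = \sum_i v^i \otimes (\nabla_{\!n}^i/i!)(c_k)$, propagated from Chern classes to arbitrary monomials by induction using the Leibniz rule for $\nabla_{\!n}$. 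Once both identifications are in hand, the present proposition is just the composite statement.

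The merit of recording this equality is conceptual rather than technical: it shows that the topologically-defined subring $PH^*(BU_n)$ — those characteristic classes of complex rank-$n$ bundles invariant under tensoring with a line bundle — coincides exactly with the image of $(Bq_n)^*$. In other words, a class in $H^*(BU_n)$ descends through the quotient $q_n \colon U_n \to PU_n$ if and only if it is insensitive to line-bundle twists, which matches the expected obstruction coming from the central $S^1 \subset U_n$.
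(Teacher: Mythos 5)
Your proposal is correct and matches the paper's own argument exactly: the paper deduces Proposition~\ref{prop:Bq*=PH)} directly from Proposition~\ref{prop:Bq_n-Alg} and Proposition~\ref{prop:PH=Ker(nabla)} by transitivity, just as you describe. No further commentary is needed.
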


\begin{remark} \label{rem:pcc}
From the isomorphism $PH^*(BU_n) = \opn{Ker}(\nabla_{\! n})$ and Vistoli's
theorem \cite[Theorem 3.7]{vistoli2007cohomology} (see Remark~\ref{rem:Ker(nabla)}), we know that
$PH^*(BU_n)$ is not in general a polynomial algebra.
Nonetheless, it would be interesting to identify a canonical minimal set of generators
$\wt c_{i,j} \in PH^{2i}(BU_n)$ of
$PH^*(BU_n)$ and then find lifts $\ol c_{i,j} \in H^*(BPU_n)$ of the $\wt c_{i,j}$.
If a natural construction of such classes $\ol c_{i,j}$ could be found, it would be tempting to call
them {\em projective Chern classes}.   For example, $H^4(BPU_n) \cong \Z$ for all $n \geq 2$
and so is generated by a projective Chern class $\ol c_{2,0}$, which is unique up to sign.
\end{remark}


\subsection{The equality $\opn{Ker}(d^{0, *}_3) = \opn{Ker}(\nabla_{\! n})$}
One approach to the computation of $H^*(BPU_n)$ is to use the
Serre spectral sequence associated to
the fiber sequence 
\begin{equation*}
 BU_n\xrightarrow{Bq} BPU_n\to K(\Z,3).
\end{equation*}
We denote the cohomological Serre spectral sequence of this fiber sequence
by $E_*^{*,*}$:
\begin{equation}\label{eq:E2-of-SSS}
 \begin{split}
  & E_2^{s,t}=H^s(K(\Z,3); H^t(BU_n))\cong H^s(\Z,3)\otimes H^t(BU_n)\Rightarrow H^{s+t}(BPU_n),\\
  & d_r^{s,t}\colon E_r^{s,t}\to E_r^{s+r,t-r+1}.
 \end{split}
\end{equation}
Let $x\in H^3(K(\Z,3))$ be the fundamental class. In the spectral sequence $E_*^{*,*}$
of \eqref{eq:E2-of-SSS}, by \cite[Corollary 3.4]{gu2019cohomology} and the Leibniz rule, for $\xi\in H^*(BU_n)$ and $\eta\in H^*(K(\Z,3))$, we have
\[d_{3}( \eta\xi)= \eta\nabla_{\! n}(\xi)x.\]
Hence $\opn{Ker}(d_3^{0, *}) =\opn{Ker}(\nabla_{\! n})$ and so by
Proposition~\ref{prop:Bq-Alg} we have

\begin{proposition}\label{pro:kernel}
$\opn{Ker}(d_3^{0,*})=(Bq)^*(H^*(BPU_n))$.
\end{proposition}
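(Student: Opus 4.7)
The plan is to deduce the equality by chaining together two already-established facts: the explicit formula for the $d_3$-differential on the $0$-column, and the algebraic identification of the image of $(Bq_n)^*$ given by Proposition~\ref{prop:Bq_n-Alg}. The point is that both sides of the desired equality can be independently identified with $\opn{Ker}(\nabla_{\!n}) \subset H^*(BU_n)$.

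First I would unpack the $d_3$-differential on $E_2^{0,*} = H^*(BU_n)$. By \cite[Corollary 3.4]{gu2019cohomology}, already recalled in the paragraph preceding the proposition, one has
\[
d_3^{0,*}(\xi) \;=\; \nabla_{\!n}(\xi)\,x_1 \qquad \text{for } \xi \in H^*(BU_n),
\]
where $x_1 \in H^3(K(\Z,3))$ is the fundamental class. Since multiplication by $x_1$ is injective on $E_2^{3,*} \cong H^3(K(\Z,3)) \otimes H^*(BU_n)$ in the relevant range (the only way $\nabla_{\!n}(\xi) \cdot x_1$ can vanish in $E_2^{3,*-2}$ is if $\nabla_{\!n}(\xi) = 0$), I would conclude at once that $\opn{Ker}(d_3^{0,*}) = \opn{Ker}(\nabla_{\!n})$.

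Second, I would invoke Proposition~\ref{prop:Bq_n-Alg}, which has already been established and identifies $(Bq_n)^*(H^*(BPU_n)) = \opn{Ker}(\nabla_{\!n})$. Combining the two identifications gives
\[
\opn{Ker}(d_3^{0,*}) \;=\; \opn{Ker}(\nabla_{\!n}) \;=\; (Bq_n)^*(H^*(BPU_n)),
\]
which is the desired equality.

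There is really no hard step: the proposition is essentially a corollary of the prior machinery, and the only thing to observe is that the two independent characterisations of $\opn{Ker}(\nabla_{\!n})$ (as the kernel of $d_3$ from the spectral sequence side, and as the image of $(Bq_n)^*$ from the algebraic side of Section~\ref{s:primitive}) agree. As an automatic consequence, since $(Bq_n)^*(H^*(BPU_n))$ is the edge image $E_\infty^{0,*}$ and the latter is a subgroup of $\opn{Ker}(d_r^{0,*})$ for every $r \geq 3$, one obtains that all higher differentials $d_r^{0,*}$ with $r>3$ vanish on $\opn{Ker}(d_3^{0,*})$, recovering the statement foreshadowed in the discussion of Theorem~\ref{thm:6groups}.
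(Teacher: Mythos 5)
Your proof is correct and follows essentially the same route as the paper: identify $\opn{Ker}(d_3^{0,*})$ with $\opn{Ker}(\nabla_{\!n})$ via the formula from \cite[Cor.~3.4]{gu2019cohomology}, then invoke Proposition~\ref{prop:Bq_n-Alg} to match it with $(Bq_n)^*(H^*(BPU_n))$. The only difference is that you make the injectivity of multiplication by $x_1$ on $E_2^{3,*}$ explicit (a point the paper leaves implicit), and you append the observation that higher differentials then vanish, which the paper records separately in the theorem that follows.
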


\noindent
Since $E_\infty^{0, *} = (Bq)^*(H^*(BPU_n))$, we obtain
\begin{theorem}
The Serre spectral sequence $E_*^{*,*}$ above satisfies $d_r^{0, *}=0$ for $r>3$.
Hence $\opn{Ker}(\nabla_{\! n}) = E_3^{0, *} = E_{\infty}^{0,*} = (Bq)^*(H^*(BPU_n)) = PH^*(BU_n)$. \qed
\end{theorem}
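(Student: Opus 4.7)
The plan is to sandwich $E_r^{0,*}$ for $r\geq 4$ between two groups that are already known to coincide, forcing every higher differential out of the $0$-column to vanish; the stated chain of equalities will then follow by simply combining the earlier identifications.

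First I would observe that because $K(\Z,3)$ has no cohomology in degrees $1$ or $2$, the $E_2$-page of \eqref{eq:E2-of-SSS} vanishes in columns $s=1,2$. In particular $d_2^{0,*}=0$, so $E_3^{0,*}=H^*(BU_n)$, and the formula $d_3(\xi\eta)=\nabla_{\!n}(\xi)x_1\eta$ recalled immediately above yields

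\[E_4^{0,*} \;=\; \opn{Ker}(d_3^{0,*}) \;=\; \opn{Ker}(\nabla_{\!n}).\]

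By Proposition~\ref{pro:kernel} this also equals $(Bq_n)^*(H^*(BPU_n))$.

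Next I would invoke the standard identification of $E_\infty^{0,*}$ with the image of the edge homomorphism: the composition $H^*(BPU_n)\twoheadrightarrow E_\infty^{0,*}\hookrightarrow E_2^{0,*}=H^*(BU_n)$ is precisely $(Bq_n)^*$, so $E_\infty^{0,*}=(Bq_n)^*(H^*(BPU_n))$. Hence $E_4^{0,*}=E_\infty^{0,*}$. For each $r\geq 4$ no differential enters $E_r^{0,*}$ (its source would lie in a negative column), so $E_{r+1}^{0,*}=\opn{Ker}(d_r^{0,*})\subseteq E_r^{0,*}$, giving a decreasing chain

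\[E_4^{0,*}\supseteq E_5^{0,*}\supseteq\cdots\supseteq E_\infty^{0,*}\]

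whose endpoints coincide. Therefore $E_r^{0,*}=E_\infty^{0,*}$ and $d_r^{0,*}=0$ for every $r>3$.

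Finally, I would chain the algebraic identification $\opn{Ker}(\nabla_{\!n})=(Bq_n)^*(H^*(BPU_n))$ from Proposition~\ref{prop:Bq_n-Alg} with the topological identification $PH^*(BU_n)=\opn{Ker}(\nabla_{\!n})$ from Proposition~\ref{prop:PH=Ker(nabla)} to deduce the full string of equalities. The argument is essentially a bookkeeping step; the only mild subtlety is pinning down the edge-homomorphism identification of $E_\infty^{0,*}$, but this is a standard feature of the Serre spectral sequence, and I do not anticipate any real obstacle. The substantive content is already encoded in Theorem~\ref{thm:mainPU} (via Proposition~\ref{prop:Bq_n-Alg}) and in the explicit formula for $d_3$.
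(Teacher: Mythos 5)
Your argument is correct and is essentially the same as the paper's: identify $\opn{Ker}(d_3^{0,*})=\opn{Ker}(\nabla_{\!n})$ via the explicit $d_3$-formula, identify $E_\infty^{0,*}=(Bq_n)^*(H^*(BPU_n))$ via the edge homomorphism, observe that these two coincide by Proposition~\ref{pro:kernel} (itself a consequence of Theorem~\ref{thm:mainPU}), and conclude that nothing more can die off the $0$-column, so $d_r^{0,*}=0$ for $r>3$. You are in fact slightly more careful than the theorem statement itself: since the kernel of $d_3^{0,*}$ is $E_4^{0,*}$, not $E_3^{0,*}$ (which is all of $H^*(BU_n)$ because columns $1,2$ of the $E_2$-page vanish), your writing of the chain as $E_4^{0,*}=E_\infty^{0,*}$ rather than $E_3^{0,*}=E_\infty^{0,*}$ is the precise version.
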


\numberwithin{equation}{section}
\numberwithin{theorem}{section}

\appendix 
\section{The cellular action on a covering space}\label{sec:covering_space}
Throughout this appendix, $p \colon E\to B$ is a covering space, $F_{b_0} \coloneqq p^{-1}(b_0)$ for $b_0 \in B$ the base point and $U$ denotes the group of deck transformations of $p$.
%
%
For a $CW$-complex $X$, recall that $X^{(n)}$ denotes its $n$-skeleton.

The following lemma (\cite[Proposition 1.34]{hatcher2002algebraic}) is elementary. 

\begin{lemma}\label{lem:UniqueLifting}
Let $X$ be a connected space, and $f\colon X\to B$ a map.  If $\wt{f}_0, \tilde{f}_1 \colon X \to E$ are lifts of $f$ over $p$ such that $\tilde{f}_0(x) = \tilde{f}_1(x)$ for some $x\in X$, then $\tilde{f}_0 = \tilde{f}_1$. \qed
\end{lemma}

\begin{lemma}\label{lem:CellLifting}
    If $B$ is a $CW$-complex, then there is a unique $CW$-complex structure on $E$ such that, if $c$ is a cell of $B$ and $ c^{\circ}$ denotes the corresponding open cell of $c$, then
    there are cells of $E$, denoted by $\{\tilde{c}_i\}_i$, such that
    \begin{enumerate}
        \item\label{ite:a1} 
        we have $p^{-1}(c^{\circ}) = \sqcup_i\tilde{c}_i^{\circ}$, and 
        \item\label{ite:a2} the restriction of $p$ on each $\tilde{c}_i^{\circ}$ is a homeomorphism onto $c^{\circ}$.
    \end{enumerate}
    In particular, for $n\geq 0$, we have $E^{(n)} = p^{-1}(B^{(n)})$.
\end{lemma}
\begin{proof}
    The lemma holds trivially if $B$ is of dimension $0$. For the general case, we argue by induction. Suppose that $E^{(n)} = p^{-1}(B^{(n)})$ has a CW-complex structure satisfying (\ref{ite:a1}) and (\ref{ite:a2}). 
    
    Let $c$ be an $(n+1)$-cell of $B$. Since $c^{\circ}$ is contractible, all covering spaces over $c^{\circ}$ are isomorphic to disjoint unions of copies of itself. Therefore, we have $p^{-1}(c^{\circ}) = \sqcup_i e_i$ where each $e_i$ is an open subset of $E$ homeomorphic to $c^{\circ}$. Let $\tilde{c}_i$ be the closure of $e_i$ in $E$.

    Let $D^{n+1}$ be the closed unit ball of dimension $n+1$. Let $\varphi^c:D^{n+1}\to B$ be the character map of $c$. By Lemma~\ref{lem:UniqueLifting}, $\varphi^c$ has a unique lifting $\tilde{\varphi}^c_i\colon D^{n+1}\to E$ such that $\tilde{\varphi}^c_i$ maps the interior of $D^{n+1}$ homeomorphically onto $e_i$. Let 
     $\tilde{c}_i = \tilde{\varphi}^c_i(D^{n+1})$, and we have $e_i = \tilde{\varphi}^c_i((D^{n+1})^{\circ})$ where $(D^{n+1})^{\circ}$ is the unit open ball of dimension $n+1$.

    We proceed to show that $\tilde{c}_i$ is the closure of $e_i$, and so we have $\tilde{c}_i^{\circ} = e_i$.  The inclusion $\tilde{c}_i\subseteq\opn{cl}(e_i)$ 
    follows immediately from the fact that $D^{n+1}$ is the closure of $(D^{n+1})^{\circ}$ in $D^{n+1}$. For the inclusion of the other direction, let $x\in \opn{cl}(e_i)$. Then  $p(x)\in\opn{cl}(c^{\circ}) = \varphi^c(D^{n+1})$. Therefore, we have $u\in D^{n+1}$, such that $p\tilde{\varphi}^c_i(u) = p(x)$. Let $O$ be a connected neighborhood of $p(x)$ such that $p$ restricts to a trivial covering space over $O$. Let $\tilde{O}_1$ and $\tilde{O}_2$ be connected components of $p^{-1}(O)$ containing $\tilde{\varphi}^c_i(u)$ and $x$, respectively. Since $O\cap c^{\circ}\neq\emptyset$, $\tilde{O}_1$ and $\tilde{O}_2$ both have non-empty intersections with $e_i$. Therefore, we have 
    \[\emptyset\neq \tilde{O}_1\cap e_i = p^{-1}(O\cap c^{\circ}) = \tilde{O}_2\cap e_i,\]
    implying $\tilde{O}_1 = \tilde{O}_2$, and therefore    
    $\tilde{\varphi}^c_i(u) = x$, proving $\opn{cl}(e_i)\subseteq\tilde{c}_i$.
   
    The closure-finiteness of $\tilde{\varphi}^c_i$, i.e., that $\tilde{\varphi}^c_i(\partial D^{n+1})$ is covered by a finite number of cells of dimension less than $n+1$, follows from the inductive hypothesis.

    The weak topology follows from the inductive hypothesis and the fact that closeness is a local property.
\end{proof}



\begin{lemma}\label{lem:Gcell}
If $B$ is a $CW$-complex,
then the $U$-action on $E$ from 
Lemma~\ref{lem:CellLifting}
is free and cellular,
permuting the cells in $p^{-1}(c)$ for each cell $c \subset B$.
In particular, $E$ is a $U$-$CW$-complex.
\end{lemma}

\begin{proof}
We set $p^{(n)} \coloneqq p|_{E^{(n)}}$, 
consider the covering maps $p^{(n)}\colon E^{(n)} \to B^{(n)}$ and proceed inductively on $n$. 
    
For $n = 0$, this is trivial.
Suppose that $U$ acts on $E^{(n)}$ by permuting the cells $\tilde{c}_e$ for all $e\in F_b$, for any cell $c$ of $B$ of dimension $\leq n$, and $b\in c^{\circ}$. It remains to define the action of $U$ on the set
    \begin{equation*}
        \{\tilde{c}^{\circ}_e\mid e\in F_b\},
    \end{equation*}
     where $c$ is an $(n{+}1)$-cell of $B$ and $b\in c^{\circ}$. This follows immediately from Lemma \ref{lem:CellLifting}, by extending the $U$-action on $F_b$. 
\end{proof}

\section{Milgram's Construction of Classifying spaces}\label{sec:Milgram}
In this appendix we review the Milgram's construction of the classifying space of a topological group $\Gamma$.
Our exposition follows \cite[Chapter II]{adem2013cohomology}.
Let 
\[\sigma^n \coloneqq \{(t_1,\cdots,t_n)\mid 0\leq t_1\leq\cdots\leq t_n\leq 1\}\]
denote the standard topological $n$-simplex.

\begin{definition}\label{def:Milgram}
Define the quotient space 
$\EM \Gamma \coloneqq \left( \coprod_n \Gamma\times \sigma^n\times \Gamma^{\times n}\right)/\!\sim$,
where the equivalence relation $\sim$ is given by 
  \begin{equation*}
    \begin{split}
    &(\gamma, t_1,\cdots,t_n, \gamma_1,\cdots,\gamma_n)\sim \\
      &\begin{cases}
        (\gamma, t_1,\cdots,\hat{t_i}\cdots, t_n, \gamma_1, \cdots,\hat{\gamma_i}, \gamma_i\gamma_{i+1},\cdots,\gamma_n),\ \textrm{if }t_i = t_{i+1}$ or $\gamma_i = 1,\\
        (\gamma \gamma_1, t_2,\cdots,t_n, \gamma_2, \cdots,\gamma_n),\ \textrm{if } t_1 = 0,\\
        (\gamma, t_1,\cdots,t_{n-1}, \gamma_1,\cdots,\gamma_{n-1}),\ \textrm{if } t_n = 1.
      \end{cases}
    \end{split}
  \end{equation*}
  A point in $\EM \Gamma$ is denoted  by $[\gamma, t_1,\cdots,t_n, \gamma_1,\cdots,\gamma_n]$. 
  There is a continuous left $\Gamma$-action on $\EM\Gamma$ defined by
  \begin{equation}\label{eq:Gamma_act_EGamma}
      \gamma'\cdot[\gamma, t_1,\cdots,t_n, \gamma_1,\cdots,\gamma_n]\coloneqq [\gamma'\gamma, t_1,\cdots,t_n, \gamma_1,\cdots,\gamma_n],
  \end{equation}
  and it is easy to check that this action is free. 
  The classifying space $\BM \Gamma$ is defined to be the quotient space $\BM\Gamma \coloneqq \EM\Gamma/\Gamma$. A point of $\BM \Gamma$, the orbit of $[\gamma, t_1,\cdots,t_n, \gamma_1,\cdots,\gamma_n]$, is denoted by 
\[[t_1,\cdots,t_n, \gamma_1,\cdots,\gamma_n].\]
\end{definition}

If $(\Gamma,1)$ is an NDR pair, then the quotient map $\EM \Gamma\to \BM \Gamma$ is a universal principal $\Gamma$-bundle. See \cite[Theorem 8.3]{steenrod1968milgram}.

If $f \colon \Gamma\to \Lambda$ is a continuous homomorphism, then there is an induced map $\EM f \colon  \EM \Gamma\to \EM \Lambda$ defined by
\[\EM f([\gamma, t_1,\cdots,t_n, \gamma_1,\cdots, \gamma_n]) = [f(\gamma),  t_1,\cdots,t_n, f(\gamma_1),\cdots,f(\gamma_n)].\]
Passing to quotients, we have the map $\BM f\colon \BM \Gamma\to \BM \Lambda$,
defined by
\[\BM f([t_1,\cdots,t_n, \gamma_1,\cdots, \gamma_n]) = [t_1,\cdots,t_n, f(\gamma_1), \cdots, f(\gamma_n)].\]
%

\begin{proposition}\label{pro:BMfunctor}
The maps $\EM f$ and $\BM f$ defined above make $\EM$ and $\BM$ functors from the category of topological groups (with continuous homomorphisms) to the category of spaces. 

If, in addition,  $(\Gamma,1)$ and $(\Lambda, 1)$ are NDR pairs, then the pull-back of $\EM \Lambda\to \BM \Lambda$ along $\BM f$ is isomorphic to the principal $\Lambda$-bundle $\EM \Gamma \times_\Gamma \Lambda\to \BM \Gamma$, where $\Gamma$ acts on $\Lambda$ on the left via $f$. We regard the action of $\Gamma$ on $E\Gamma$ as a right action via $x\cdot\gamma:= \gamma^{-1}x$. 
\end{proposition}
\begin{proof}
    The proof of the statement that $\EM f$ and $\BM f$ defined above make $\EM$ and $\BM$ functors from the category of topological groups to space is purely formal and can be found in \cite[II.1]{adem2013cohomology}.

    To show the statement of the second paragraph, consider the map
    \begin{equation*}
        \begin{split}
            E\Gamma\times_{\Gamma}\Lambda &\to B\Gamma\times_{B\Lambda}E\Lambda,\\
            ([\gamma,t_1,\cdots,t_n,\gamma_1,\cdots,\gamma_n], \lambda) &\mapsto\\ ([t_1,\cdots,t_n,\gamma_1,\cdots,&\gamma_n], [\lambda^{-1}f(\gamma), t_1,\cdots,t_n,f(\gamma_1),\cdots,f(\gamma_n))]).
        \end{split}
    \end{equation*}
    It is straightforward to verify that this defines an isomorphism of principal $\Gamma$-bundles over $B_M\Lambda$.
\end{proof}

Consider compactly generated (in the sense of Steenrod \cite[2.1]{steenrod1967convenient}) topological groups $\Gamma$ and $\Lambda$, and their product $\Gamma{\times} \Lambda$ with the compact topology in the sense of \cite[4.1]{steenrod1967convenient}. The projections of
$\Gamma {\times} \Lambda$ onto 
$\Gamma$ and $H$ are continuous homomorphisms and so induce maps
$\BM (\Gamma{\times}\Lambda)\to \BM \Gamma$ and 
$\BM (\Gamma {\times} \Lambda) \to \BM H$, 
which together define the map
$\phi_{\Gamma,\Lambda} \colon \BM(\Gamma {\times} \Lambda) \to \BM \Gamma {\times} \BM H$. 

By \cite[Theorem 1.8, Chapter II]{adem2013cohomology}, the map $\phi_{\Gamma,\Lambda}$ is a homeomorphism, 
to which the inverse is given by 
\begin{equation*}
  ([t_1,\cdots,t_n, \gamma_1,\cdots,\gamma_n], [s_1,\cdots,s_m, \lambda_1,\cdots,\lambda_m])\mapsto [\tau_1\cdots,\tau_{m+n},\theta_1,\cdots,\theta_{m+n}],
\end{equation*}
where $\tau_1\cdots,\tau_{m+n}$ are $t_1,\cdots, t_n, s_1,\cdots,s_m$ in the increasing order, and 
\begin{equation*}
  \theta_k = 
  \begin{cases}
    (\gamma_u,1),\ \tau_k = t_u,\\
    (1,\lambda_v),\ \tau_k = s_v.
  \end{cases}
\end{equation*}
One easily generalizes the above to see that there is a homeomorphism 
\begin{equation}\label{eq:BmG}
  \zeta\colon \prod_i \BM \Gamma_i\to \BM (\prod_i \Gamma_i)
\end{equation}
for compactly generated groups $\Gamma_1,\cdots,\Gamma_n$. In the case $\Gamma_1 = \cdots = \Gamma_n = \Gamma$, the permutation group $S_n$ acts on both sides of \eqref{eq:BmG}, by permuting the components $\BM \Gamma$ on the left, and the components $\Gamma$ on the right. Unfolding the definitions yields the following

\begin{proposition}\label{pro:S_n_equiv}
The homeomorphism $\zeta$ is $S_n$-equivariant. \qed
\end{proposition}





Let $\Gamma = \Lambda \rtimes K$ where $K$ be a discrete group. We define a right $K$-action on $\EM \Lambda$ as follows: For $k\in K$,  and a point $[x, \tau_1,\cdots,\tau_n, x_1\cdots, x_n]$ in $\EM \Lambda$, let
\begin{equation}\label{eq:Gamma_act_BH}
   k\cdot[x, \tau_1,\cdots,\tau_n, x_1\cdots, x_n] \coloneqq [\cc_k( x), \tau_1,\cdots,\tau_n, \cc_k(x)\cdots,  \cc_k(x_n)],
\end{equation}
where $\cc_k(y) = kyk^{-1}$ in $\Lambda \rtimes K$ for $y\in \Lambda$. It is formal to check that this action is well defined. The following lemma is immediate.

\begin{lemma}\label{lem:Gamma_act_BH}
    Let $\Gamma = \Lambda \rtimes K$, where $K$ is discrete and hence $\Lambda \subseteq \Lambda \rtimes K$ is closed.
    \begin{enumerate}
        \item\label{ite:Gamma_act_BH_1} The $\Lambda$-action on $E_M\Lambda$ defined by \eqref{eq:Gamma_act_EGamma} extends uniquely to a $\Gamma$-action, such that
        
        \item when restricted to $K$, is the left $K$-action \eqref{eq:Gamma_act_BH}, and 
     
        \item the $K$-action  \eqref{eq:Gamma_act_BH} on $E_M\Lambda$ induces a $K$-action on $B_M\Lambda$ that coincides with $\BM \cc_k$. \qed
    \end{enumerate}
\end{lemma}


Let $\varphi\colon \Gamma\to K$ be the projection. 

\begin{lemma}\label{lem:htpy_orbit}
  Consider the homotopy orbit space $\BM \Lambda/\!/ K \coloneqq \EM K\times _K\BM \Lambda$. Here we regard the action of $K$ on $\EM K$ as a right action via $x\cdot k \coloneqq k^{-1}\cdot x$.
\begin{enumerate}
        \item\label{ite:htpy_orbit_1} The projection $f \colon \BM \Lambda/\!/ K\to \BM K$ is a fiber bundle with fibers $\BM \Lambda$. 
        \item\label{ite:htpy_orbit_2} Let $\mathscr{H}$ be the local system associated to the fiber bundle of \eqref{ite:htpy_orbit_1} with the stalk over $x\in \BM K$ being $H^*(f^{-1}x;R)\cong H^*(\BM \Lambda;R)$ for a ring $R$. For $k\in K\cong\pi_1(\BM K)$, $\mathscr{H}$ sends $k$ to the automorphism $H^*(\cc_{k^{-1}})$.
        \item\label{ite:htpy_orbit_3} The space $\BM \Lambda/\!/ K$ is the base space of a universal principal $\Gamma$-bundle.
        \item\label{ite:htpy_orbit_4} The projection $f$ is a model for $B\varphi\colon B\Gamma\to BK$.
    \end{enumerate}  
\end{lemma}
\begin{proof}
    The assertion \eqref{ite:htpy_orbit_1} follows from the fact that $\EM K\to \BM K$ is a principal $K$-bundle. 
    
    To establish \eqref{ite:htpy_orbit_2}, observe that $K \cong \pi_1(\BM K)$ and that for $k \in K$, the deck transformation on $\EM K$ coincides with the $K$-action on $\EM K$ defined by \eqref{eq:Gamma_act_EGamma}. For $[a, x] \in \BM \Lambda/\!/ K$, where $a \in \EM K$ and $x \in \EM \Lambda$, we have $[ak, x] = [a, \cc_k(x)]$, and \eqref{ite:htpy_orbit_2} ensues.

    Concerning \eqref{ite:htpy_orbit_3}, consider the space $\EM K \times \EM \Lambda$. Define, for $\gamma \in \Gamma$, $a \in \EM K$, and $b \in \EM \Lambda$, 
    \[
        (a, b) \cdot \gamma \coloneqq (a \cdot \varphi(\gamma), b \cdot \gamma).
    \]
    This action is proven to be free by Lemma \ref{lem:Gamma_act_BH}, \eqref{ite:Gamma_act_BH_1}. The resulting quotient space is homeomorphic to $\BM \Lambda/\!/ K$. Denote the universal principal $K$-bundle $\EM K \to \BM K$ by $p$. Suppose 
    that $\{U_i\}_i$ is trivializing open cover of $p$. Let $p_1\colon \EM K \times_K \BM \Lambda \to \BM K$ be the projection, and define bundle isomorphisms
    \[
        h_i\colon U_i \times \BM \Lambda \cong p_1^{-1}(U_i).
    \]
    Then $p_1^{-1}(U_i)$ possesses an open cover in the form of $\{W_{ij} \coloneqq h_i(U_i \times V_j)\}_j$ where $\{V_j\}_j$ is a trivializing open cover of $E_M \Lambda\to B_M\Lambda$.  Consequently, the quotient map
    \[
        \EM K \times \EM \Lambda \to \BM \Lambda/\!/ K
    \]
    forms a principal $\Gamma$-bundle with a local trivialization $\{W_{ij}\}_{ij}$, 
    which establishes \eqref{ite:htpy_orbit_3}.
    
    Assertion \eqref{ite:htpy_orbit_4} follows directly from \eqref{ite:htpy_orbit_1} and \eqref{ite:htpy_orbit_3}.
    %
    \end{proof}

The following proposition 
simply combines
Parts \ref{ite:htpy_orbit_3} and \ref{ite:htpy_orbit_4} of Lemma~\ref{lem:htpy_orbit}.

\begin{proposition}\label{pro:phi-adapted}
    With the $K$-action above, $\EM \Lambda\to \BM \Lambda$ is a $\varphi$-adapted model for $B\Lambda$. \qed
\end{proposition}

\bibliographystyle{abbrv}
\bibliography{RefWeylGroupInv}

\end{document}